\documentclass[12pt]{amsart}
\usepackage{amsmath,amssymb,amscd,array, mathrsfs }

\usepackage{amsmath,amscd,amssymb,amsthm,array}
\usepackage{color}

\usepackage{amsmath,amssymb,mathrsfs,amsthm, tikz-cd,mathrsfs}
\usepackage{comment}
\def\url#1{\expandafter\s

\tring\csname #1\endcsname}

\def\mmat #1,#2,#3,#4,{\text{\small\arraycolsep=3pt $
\begin{pmatrix}#1&#2\\#3&#4\end{pmatrix}$}}

\usepackage{hyperref}
\usepackage{capt-of}

\usepackage{multirow}
\usepackage[all]{xy}

\usepackage{comments}
\newComments\SBe{Said}{blue}
\newComments\SBo{Sofiane}{blue}
\newComments\AM{Nacer}{blue}
\newComments\DL{DL}{red}
\newComments\QEh{QEh}{blue}

\def\mmat #1,#2,#3,#4,{\text{\small\arraycolsep=3pt $
\begin{pmatrix}#1&#2\\#3&#4\end{pmatrix}$}}

\usepackage{lscape}
\usepackage{tikz-cd}
\usepackage{enumerate}

\usepackage{DLdef1}
\usepackage{multicol}

\def\mmat #1,#2,#3,#4,{\text{\small\arraycolsep=3pt $
\begin{pmatrix}#1&#2\\#3&#4\end{pmatrix}$}}

\renewcommand {\ssbegin}[2][*]
 {\refstepcounter{subsection}%
\if#1*
\addcontentsline{toc}{subsection}{\thesubsection.\hskip 1pc #2}%
\else
\addcontentsline{toc}{subsection}{\thesubsection.\hskip 1pc #2. #1}%
\fi
 \def \secno {\gdef \secno {}{\ssecfont
\thesubsection.\hskip 2ex}%
 }%
 \begin{#2}}

\renewcommand {\sssbegin}[2][*]
 {\refstepcounter{subsubsection}
\if#1*
\addcontentsline{toc}{subsubsection}{\thesubsubsection.\hskip 1pc #2}%
\else
\addcontentsline{toc}{subsubsection}{\thesubsubsection.\hskip 1pc #2. #1}
\fi
 \def \secno {\gdef \secno {}{\ssecfont \thesubsubsection.\hskip 2ex}%
 }%
 \begin{#2}}

\renewcommand {\parbegin}[2][*]
 {\refstepcounter{paragraph}
\if#1*
\addcontentsline{toc}{paragraph}{\theparagraph.\hskip 1pc #2}%
\else
\addcontentsline{toc}{paragraph}{\theparagraph.\hskip 1pc #2. #1}
\fi
 \def \secno {\gdef \secno {}{\ssecfont \theparagraph.\hskip 2ex}%
 }%
 \begin{#2}}

\rmnameii{etr}{etr}
\rmnameii{evv}{ev}
\DeclareMathOperator{\h}{\mathcal{H}}

\DeclareMathOperator{\K}{\mathbb{K}}
\newcommand{\M}{{\cal M}}
\newcommand{\G}{\mathfrak{g}}
\newcommand{\g}{\mathfrak{g}}
\newcommand {\A}{{\cal{A}}}

\newcommand{\Ll}{{\mathrm{L}}}
\newcommand{\Rr}{{\mathrm{R}}}

\newcommand{\al}{\alpha}

\newcommand{\la}{\lambda}

\newcommand{\de}{\delta}  
\newcommand{\prs}{\langle\;,\;\rangle}
\def\br{[\;,\;]}
\newcommand{\esp}{\quad\mbox{and}\quad}

\begin{document}

\title[On flat pseudo-Euclidean solvable Malcev algebras ]{On flat pseudo-Euclidean solvable Malcev algebras  }

\author{Mohamed Boucetta}
\address{Universit\'e Cadi-Ayyad,
	Facult\'e des sciences et techniques,
	B.P. 549, Marrakech, Maroc.}
\email{m.boucetta@uca.ac.ma}

\author{Hamza El Ouali$^{*}$}
\address{Universit\'e Cadi-Ayyad,
	Facult\'e des sciences et techniques,
	B.P. 549, Marrakech, Maroc.}
\email{eloualihamza11@gmail.com}

\renewcommand{\thefootnote}{\fnsymbol{footnote}}
\footnotetext[1]{Corresponding author.}

\author{Oumaima Tibssirte}
\address{ Department of Health and Agro-Industry Engineering, High School of Engineering and Innovation ofMarrakesh, Private University of Marrakesh, Road Amezmiz, Marrakech, Morocco }
\email{oumayma1tibssirte@gmail.com}


\keywords{Malcev algebras, Lie algebras, flat pseud-Euclidean Lie algebras, flat pseud-Euclidean Malcev algebras, solvable Malcev, nilpotent Malcev, the Levi-Civita product,    Double extension}

 \subjclass[2020]{17D05; 17A60; 17B30; 53C05; 53C50}

\begin{abstract}
A pseudo-Euclidean Malcev algebra is a Malcev algebra equipped with a non-degenerate symmetric bilinear form. In this paper, we introduce a curvature operator for pseudo-Euclidean Malcev algebras, generalizing the notion of curvature for pseudo-Euclidean Lie algebras. We define flat pseudo-Euclidean Malcev algebras and show that every flat Euclidean solvable Malcev algebra which is also a Lie algebra remains flat in the classical Lie algebra curvature sense. Furthermore, we develop the flat double extension construction for flat pseudo-Euclidean Malcev algebras and prove that every flat  Lorentzian Malcev algebra with a degenerate center can be obtained via the flat double extension of a flat Euclidean Malcev algebra. Moreover, we demonstrate that all flat Lorentzian nilpotent Malcev algebras arise from the flat double extension of Euclidean abelian Lie algebras. Finally, we establish that any flat Lorentzian nilpotent Malcev algebra is necessarily a Lie algebra and is flat in the classical Lie algebra curvature sense.
	\end{abstract}


\maketitle

\thispagestyle{empty}
\setcounter{tocdepth}{2}
\section{Introduction}

A \emph{pseudo-Riemannian Lie group} is a Lie group $G$ endowed with a left-invariant pseudo-Riemannian metric $\mu$. Its Lie algebra $\mathfrak{g}=T_eG$, equipped with the scalar product $\prs=\mu_e$, is called a \emph{pseudo-Riemannian Lie algebra} or, equivalently, a \emph{pseudo-Euclidean Lie algebra}. Whenever there exists a torsion-free connection compatible with the metric, it is necessarily unique and is called the \emph{Levi-Civita connection}. This connection induces a bilinear product on $\mathfrak{g}$, called the \emph{Levi-Civita product}, defined by  Koszul’s formula:
$$
2\langle u\bullet v, w\rangle
= \langle [u,v], w\rangle
- \langle [v,w], u\rangle
+ \langle [w,u], v\rangle,
  \qquad \forall u,v,w\in\mathfrak{g}.
  $$
  This product satisfies
  $$
  u\bullet v - v\bullet u = [u,v],\qquad
  \langle u\bullet v, w\rangle = -\langle v, u\bullet w\rangle,
  \qquad \forall u,v,w\in\mathfrak{g}.
  $$
In \cite{Boucetta0}, an analogue of the Levi-Civita product was introduced for pseudo-Euclidean non-associative algebras, in particular anti-commutative algebras, and this construction was subsequently extended to non-associative superalgebras in \cite{BBE}.

The curvature of $(\mathfrak{g},\prs)$ at the identity is
$$
K_0(u,v):=\Ll_{[u,v]}-[\Ll_u,\Ll_v],\qquad \forall u,v\in\mathfrak{g},
$$
where $\Ll_u$ denotes left multiplication, $\Ll_u(v)=u\bullet v$.

A pseudo-Riemannian Lie group $(G,\mu)$ is called \emph{flat} if its curvature vanishes identically; in this case, $(\mathfrak{g},\prs)$ is called a \emph{flat pseudo-Riemannian Lie algebra}. The vanishing of the curvature is equivalent to the fact  that $\mathfrak{g}$,  endowed with the Levi-Civita product, is a left-symmetric algebra, i.e., for any $u,v,w\in \g$
$$(u, v, w) = (v,u,w),$$
where $(u, v, w)=(u\bullet v)\bullet w-u\bullet(v\bullet w)$. So a flat pseudo-Riemannian Lie algebra can be viewed as a left-symmetric algebra with a non-degenerate symmetric bilinear form for which the left
multiplications are skew-symmetric. The study of flat pseudo-Riemannian Lie algebras originated in the Riemannian case. A foundational result, proved by J. Milnor in 1976 \cite{Milnor}, characterizes flat Riemannian Lie algebras: a Lie algebra $\g$ is flat Riemannian if and only if it admits an orthogonal decomposition
$
\g = \mathfrak{b} \oplus \mathfrak{u},
$
where $\mathfrak u$ is an abelian ideal, $\mathfrak b$ is an abelian subalgebra, and for every $b\in \mathfrak b$, the adjoint map $\ad_b$ is skew-symmetric with respect to the inner product, where $\ad_b(a)=[a,b]$, for all $a\in \g$.

In recent years, many authors have investigated flat pseudo-Riemannian Lie algebras of various signatures, using in particular the method of double extension introduced in \cite{Medina}. In \cite{Medina}, it was shown that all flat Lorentzian nilpotent Lie algebras are obtained by the double extension process from Riemannian abelian Lie algebras. Further developments include the study of flat Lorentzian Lie algebras in \cite{Boucetta}, where it was shown that all flat Lorentzian nilpotent Lie algebras with degenerate center are obtained by double extensions from flat Riemannian Lie algebras, and that all flat Lorentzian Lie algebras with a degenerate center are unimodular. Moreover, \cite{Boucetta2} showed that all non-unimodular flat Lorentzian Lie algebras are obtained by double extensions from flat Riemannian Lie algebras. In \cite{Bajo}, a complete classification of flat Lorentzian nilpotent Lie algebras was given. Furthermore, in \cite{Boucetta3}, flat pseudo-Riemannian nilpotent Lie algebras of signature $(2,n-2)$ were studied, showing that all  flat pseudo-Riemannian nilpotent Lie algebras of signature $(2,n-2)$ are obtained by double extensions from flat Lorentzian nilpotent Lie algebras. Finally, in \cite{Boucetta1, Hicham, Hicham1}, particular cases of flat pseudo-Riemannian Lie algebras were studied, highlighting their structural properties.

The notion of \emph{Malcev algebras} was introduced by A. Malcev in the 1950s as a generalization of Lie algebras \cite{Malcev}. He called these objects Moufang-Lie algebras because of their close connection with analytic Moufang loops. A Malcev algebra $(\mathcal M,\br)$ over a field $\mathbb K$ is an anticommutative algebra satisfying the Malcev identity
$$
J(u,v,[u,w]) = [J(u,v,w),u], \qquad \forall u,v,w\in\mathcal M,
$$
where $J(u,v,w)= [[u,v],w]+[[v,w],u]+[[w,u],v]$, denotes the Jacobian operator. When $J(u,v,w)=0$  for all $u,v,w\in\mathcal M$,  the Malcev algebra reduces to a Lie algebra. Hence, Lie algebras form a subclass of Malcev algebras. Malcev algebras play an important role in mathematical physics and in the geometry of smooth loops. Indeed, just as the tangent algebra of a Lie group is a Lie algebra, the tangent algebra of a locally analytic Moufang loop is a Malcev algebra, (see, for instance \cite{Malcev, K, K1, G}).

The notion of a \emph{pre-Malcev algebra}, first introduced by S. Madariaga in \cite{Sara}, consists of a vector space $\mathcal M$ endowed with a bilinear product $\bullet$ satisfying the identity
\begin{align*}
\;\;\;\;\;\; (v& \bullet w)\bullet (u \bullet z) - (w \bullet v) \bullet (u \bullet z) + ((u \bullet v) \bullet w) \bullet z - ((v \bullet u) \bullet w) \bullet z \\
      & - (w \bullet (u \bullet v)) \bullet z + (w \bullet (v \bullet u)) \bullet z + v \bullet ((u \bullet w) \bullet z) - v \bullet ((w \bullet u) \bullet z) \\
   & - u\bullet (v\bullet (w\bullet z)) + w\bullet (u\bullet (v \bullet z)) = 0,
\end{align*}
for all $u,v,w,z \in \mathcal M$. Given a pre-Malcev algebra $(\mathcal M,\bullet)$, the skew-symmetric bracket defined by $
[u,v] = u\bullet v - v\bullet u
$ endows $\mathcal M$ with a Malcev algebra structure (see \cite{Sara}).

A \emph{pseudo-Euclidean Malcev algebra} is a triple
\((\mathcal M,\br,\prs)\), where
\((\mathcal M,\br)\) is a Malcev algebra endowed with a symmetric
non-degenerate bilinear form \(\prs\). In \cite{Boucetta0}, the authors introduced the notion of a
\emph{Levi-Civita product} associated with pseudo-Euclidean non-associative algebras, in particular anti-commutative algebras. On the other hand, in \cite{Sara}, the notion of
a \emph{pre-Malcev algebra} was introduced as a generalization of
pre-Lie algebras, in the sense that every pre-Lie algebra is a
pre-Malcev algebra, while the converse does not hold. In this paper, we introduce a natural \emph{curvature operator} associated with a pseudo-Euclidean Malcev algebra, which generalizes the
curvature construction arising in pseudo-Riemannian Lie algebras.
More precisely, let
\((\mathcal M,\br,\prs)\) be a
pseudo-Euclidean Malcev algebra, and let \(\bullet\) denote the
Levi-Civita product associated with
\((\mathcal M,\br,\prs)\).
For any \(u,v,w\in\mathcal M\), the curvature operator is defined by
\[
K(u,v,w)
= \Ll_{[[u,v],w]}
- \Ll_{[u,w]}\circ \Ll_v
- \Ll_u\circ \Ll_{[v,w]}
+ \Ll_v\circ \Ll_u\circ \Ll_w
- \Ll_w\circ \Ll_v\circ \Ll_u,
\]
where $\Ll_u$ denotes left multiplication \(\Ll_u(v)=u\bullet v\).

The Malcev algebra
\((\mathcal M,\br,\prs)\) is called a
\emph{flat pseudo-Euclidean Malcev algebra} if \(K\equiv 0\).
Consequently, a pseudo-Euclidean Malcev algebra is flat if and only if it
admits a Levi-Civita product endowing it with a pre-Malcev algebra structure. In this
way, a flat pseudo-Euclidean Malcev algebra can be viewed as a
pre-Malcev algebra equipped with a symmetric non-degenerate bilinear form
for which all left multiplications are skew-symmetric.

In this paper, we study flat pseudo-Euclidean Malcev algebras, with a
special emphasis on the Euclidean and Lorentzian solvable Malcev algebra cases. We show
that every  flat Euclidean solvable Malcev algebra is necessarily  a Lie algebra and is flat in
the classical Lie algebra curvature sense. (see Theorem~\ref{EMalcev} and
Corollary~\ref{co1}). Moreover, we introduce the notion of \emph{flat
double extension} for flat pseudo-Euclidean Malcev algebras
(see Theorem~\ref{double-ex1}). This construction is inspired by the
classical double extension introduced by A.~Medina and P.~Revoy
for quadratic Lie algebras \cite{Medina1}. Furthermore, we prove that every flat Lorentzian Malcev algebra whose
center is degenerate can be obtained as a flat double extension of a flat
Euclidean  Malcev algebra (see Theorem \ref{Thpr}). In particular, we show that every flat
Lorentzian solvable Malcev algebra with a degenerate center arises as a flat
double extension of a flat Euclidean Lie algebra (see Corollary \ref{SL}). We also prove that every
flat Lorentzian nilpotent Malcev algebra is obtained as a flat double
extension of a Euclidean abelian Lie algebra, and consequently, any flat
Lorentzian nilpotent Malcev algebra  is necessarily a Lie algebra and is flat in
the classical Lie algebra curvature sense. (see Theorem~\ref{Thnilpotent} and
Proposition~\ref{PLie}). 

\medskip
\noindent\textbf{Structure of the paper}.
The paper is organized as follows. In Section~\ref{s2}, we introduce the curvature operator associated with the
Levi-Civita product of a pseudo-Euclidean Malcev algebra. We study
pseudo-Euclidean Malcev algebras and prove that a quadratic Malcev algebra
is flat if and only if it is nilpotent of nilpotency class at most three.
Recall that a quadratic Malcev algebra is a Malcev algebra endowed with a
symmetric non-degenerate invariant (also called associative)  bilinear form. Quadratic Malcev
superalgebras were previously studied in \cite{ABen, Ba1, Ba2}. Moreover, we
show that if a pseudo-Euclidean Lie algebra is flat in the classical Lie algebra curvature sense, then
it is also flat as a Malcev algebra, while the converse does not hold in
general. We also prove that a Euclidean solvable Malcev algebra is flat if and only
if it is a Lie algebra which is flat in the classical Lie algebra curvature sense.

In Section~\ref{s3}, we introduce the notion of flat double extension for flat
pseudo-Euclidean Malcev algebras and establish the main structural
results associated with this construction.

In Section~\ref{s4}, we study flat Lorentzian Malcev algebras with degenerate
center. We prove that such algebras can be obtained by flat double
extensions of a flat Euclidean Malcev algebras. In particular, we show that every flat
Lorentzian solvable Malcev algebra with a degenerate center arises as a flat
double extension of a  Euclidean Lie algebra which is flat in the classical Lie algebra curvature sense and show
that every flat Lorentzian nilpotent Malcev algebra is necessarily a 
Lorentzian nilpotent Lie algebra which is flat in the classical Lie algebra curvature sense.

Throughout this work, all vector spaces are assumed to be finite dimensional over a field $\mathbb{K}$ of characteristic zero.

A \emph{pseudo-Euclidean vector space} is a finite-dimensional $\mathbb{K}$-vector space $V$ endowed with a non-degenerate symmetric bilinear form
$\prs:V\times V\longrightarrow\mathbb{K}.$ When $\mathbb{K}=\mathbb{R}$, the form $\prs$ has a well-defined signature $(q,n-q)=(-\cdots-,+\cdots+)$, where $n=\dim V$. In particular, $(V,\prs)$ is called \emph{Euclidean} if its signature is $(0,n)$, and \emph{Lorentzian} if its signature is $(1,n-1)$. 
 Throughout the paper, whenever we refer to the signature of a pseudo-Euclidean vector space, we implicitly assume that the base field is $\mathbb{R}$.

\section{Flat Euclidean solvable Malcev algebras}\label{s2}
In this section,  we introduce the
curvature operator associated with the Levi-Civita product of a pseudo-Euclidean Malcev
algebra. We study pseudo-Euclidean Malcev algebras and prove that a quadratic Malcev
algebra is flat  if and only if it is nilpotent of nilpotency class at most three. We show that if a pseudo-Euclidean Lie algebra is flat which is flat in the classical Lie algebra curvature sense, then it is also flat pseudo-Euclidean Malcev algebra, while the converse does not hold in general.
We also prove that a Euclidean solvable Malcev algebra is flat if and only if it is a Lie algebra
which is flat in the classical Lie algebra sense.

\ssbegin{Definition}(\cite{Malcev})
A Malcev algebra is a non-associative algebra \( \M \) with an anti-symmetric
bilinear multiplication \(\br : \M \times \M \to \M\) that satisfies the Malcev identity for \( u, v, w \in \M \),
\begin{equation} \label{eq:Malcev}
J(u, v, [u, w]) = [J(u, v, w), u],
\end{equation}
where the Jacobian \( J(u, v, w) \) is defined as
\(
J(u, v, w) = [[u, v], w] + [[w, u], v] + [[v, w], u].
\)

Expanding the Jacobian, since the characteristic of \( \K \) is not $2$, the Malcev identity \eqref{eq:Malcev} is
equivalent to Sagle’s identity \cite{Sagle}, for \( u, v, w, z \in \M \) we have
\begin{equation} \label{eq:Sagle}
[[u, w], [v, z]] = [[[u, v], w], z] + [[[v, w], z], u] + [[[w, z], u], v] + [[[z, u], v], w].
\end{equation}
\end{Definition}
\ssbegin{Example}
\begin{enumerate}
    \item Lie algebras are examples of Malcev algebras. In fact, every Lie algebra satisfies the Malcev identity, which is a generalization of the Jacobi identity. This makes every Lie algebra a Malcev algebra.
\item Alternative algebras are  Malcev-admissible algebras. Specifically, if \((\M, \cdot)\) is an alternative algebra, then the commutator defined by
\(
[u, v] = u \cdot v- v \cdot u
\)
defines a Malcev structure on \( \M \). These Malcev algebras are referred to as \textit{special} Malcev algebras.
\end{enumerate}
\end{Example}

Since every Lie algebra is a Malcev algebra, it follows that  all Lie-admissible algebras
are also Malcev-admissible. Below, we provide an example of a 4-dimensional Malcev algebra that is not a Lie algebra.
\ssbegin{Example}[\cite{Sagle}]
    Let \( \M \) be a 4-dimensional vector space over a field \( \mathbb{K} \) with basis \( \{e_1, e_2, e_3, e_4\} \). The bilinear product on \( \M \) is defined by the following non-zero products:
\[
[e_1 , e_2] = -e_2, \;
[e_1 , e_3 ]= -e_3, \;
[e_1 , e_4] = e_4, \;
[e_2 , e_3] = 2e_4.
\]
 endows $\M$ with a non-Lie Malcev algebra structure
\end{Example}
\ssbegin{Definition}
    A representation of a Malcev algebra \( (\M, \br) \) on a vector space \( V \) is a linear map $
\rho: \M \to \mathrm{End}(V)
$   satisfying for all \( u, v, w \in \M \) the condition:  
\[
\rho([[u, v], w]) = \rho(u)\rho(v)\rho(w) - \rho(w)\rho(u)\rho(v) + \rho(v)\rho([w, u]) - \rho([v, w])\rho(u).
\]  

\end{Definition}
\ssbegin{Definition}[\cite{Sara}]
A pre-Malcev algebra is a vector space $\M$ endowed with a bilinear product satisfying for any $u,v,w,z \in \M$ the following identity:
\begin{align*}
 (v& \bullet w)\bullet (u \bullet z) - (w \bullet v) \bullet (u \bullet z) + ((u \bullet v) \bullet w) \bullet z - ((v \bullet u) \bullet w) \bullet z \\
& - (w \bullet (u \bullet v)) \bullet z + (w \bullet (v \bullet u)) \bullet z + v \bullet ((u \bullet w) \bullet z) - v \bullet ((w \bullet u) \bullet z) \\
& - u\bullet (v\bullet (w\bullet z)) + w\bullet (u\bullet (v \bullet z)) = 0,
\end{align*}

\end{Definition}
 Let $(\A, \star)$ be a non-associative algebra. On the underlying vector space $\A$, we
define the corresponding commutator 
$$[u, v]_\star :=u\star v - v\star u,\;  \text{ for all } u,v\in\A.$$ 
The algebra $(\A, \br_\star)$ will be denoted by  $\A^-$. The algebra $(\A, \star)$ is called a Malcev-admissible
algebra, if $\A^-$ is a Malcev algebra. 

\ssbegin{Proposition}[\cite{Sara}]\label{Sara}
 Let $(\M, \bullet)$ be a pre-Malcev algebra.  Then $(\M, \bullet)$ is a Malcev-admissible
algebra, i.e., $\M^-$ is a Malcev algebra.
\end{Proposition}

\ssbegin{Definition}
Let $(\A, \star)$ be a non-associative algebra equipped with a non-degenerate symmetric bilinear form 
$\prs$. Such a form is called a pseudo-Euclidean structure on  $\A$ and the triple $(\A, \star,
\prs)$ defines what is termed a 
pseudo-Euclidean non-associative algebra.
\end{Definition}

In \cite{Boucetta0}, the authors introduced a construction analogous to the Levi-Civita product in pseudo-Euclidean Lie algebras, extending it to pseudo-Euclidean non-associative algebras. This provides a unique and natural product in this setting, particularly for anti-commutative algebras.
\ssbegin{Proposition} $($\cite{Boucetta0}$)$ \label{LV}
Let $(\M, \br, \prs)$  be a pseudo-Euclidean anti-commutative nonassociative algebra.
Then there exists a unique product $\bullet$ on the underlying vector space of the algebra $\M$ such that:
\begin{align}\label{compatible}
\langle u \bullet v, w\rangle &=-\langle v , u\bullet w\rangle,\; \\ \label{sanstorsion}
[u,v] &= u\bullet v-v\bullet u, \end{align}
for any $u,v,w\in\M$. More precisely, the product $\bullet$ is defined by
\begin{equation}
\left\langle u\bullet v, w\right\rangle=\frac{1}{2}(\langle[u,v], w\rangle-\langle [v,w], u\rangle+\langle [w,u], v\rangle), 
\label{lv}\end{equation}
for all $u,v,w \in \M$.
The product $\bullet$ is called the Levi-Civita product associated to the pseudo-Euclidean anti-commutative non-associative algebra $(\M,\br,\prs)$.

	\end{Proposition}

Let $(\M, \br, \prs)$ be a pseudo-Euclidean anti-commutative nonassociative algebra, and  $"\bullet"$  will denote the Levi-Civita product. For any $u \in \M$, we define $\Ll_u: \M \to \M$ and $\mathrm{R}_u: \M \to \M$ as the left and right multiplications by $u$ given by
$$ \mathrm{L}_u v = u \bullet v \quad \text{and} \quad \mathrm{R}_u v = v \bullet u. $$
According to the relations \eqref{compatible} and \eqref{sanstorsion}, for any $u \in \M$, $\mathrm{L}_u$ is skew-symmetric with respect to $\prs$, and $\ad_u = \mathrm{L}_u - \mathrm{R}_u$, where $\ad_u: \M \to \M$ is given by $\ad_u v = [u, v]$, for all $v\in\M$.

Following the approach used for pseudo-Euclidean Lie algebras, we define an analogous curvature operator for pseudo-Euclidean Malcev algebras. The precise definition is given below. 
\ssbegin{Definition}
Let $(\M,\br, \prs)$ be a pseudo-Euclidean Malcev algebra. We define the curvature of  
$(\M,\br,\prs)$ as follows:
    \begin{equation}
K(u, v, w) = \Ll_{[[u, v], w]} - \Ll_{[u, w]} \circ \Ll_v - \Ll_u \circ \Ll_{[v, w]} + \Ll_v \circ \Ll_u \circ \Ll_w - \Ll_w \circ \Ll_v \circ \Ll_u,
\label{K} \end{equation}
for all $u,v,w\in\M.$
\end{Definition}
\ssbegin{Proposition}
 Let $(\M,\br, \prs)$ be a pseudo-Euclidean Malcev algebra. Then we have 
 $$K(u, v, w)z+K(v,w,z)u+K(w,z,u)v+K(z,u,v)w=0,$$
 for all $u,v,w,z\in \M$.
\end{Proposition}
\begin{proof}
For any $u,v,w,z\in \M$, we have 
\[
\begin{cases}
	[[u, w], [v, z]] = -[u, w] \bullet (z \bullet v) - [v, z] \bullet (u \bullet w) + [u, w] \bullet (v \bullet z) + [v, z] \bullet (w \bullet u), \\

	[[[u, v], w], z] = z \bullet (w \bullet (u \bullet v)) - z \bullet ([u, v] \bullet w) + [[u, v], w] \bullet z - z \bullet (w \bullet (v \bullet u)), \\

	[[[v, w], z], u] = -u \bullet (z \bullet (w \bullet v)) + u \bullet (z \bullet (v \bullet w)) - u \bullet ([v, w] \bullet z) + [[v, w], z] \bullet u, \\

	[[[w, z], u], v] = [[w, z], u] \bullet v - v \bullet (u \bullet (z \bullet w)) + v \bullet (u \bullet (w \bullet z)) - v \bullet ([w, z] \bullet u), \\

	[[[z, u], v], w] = -w \bullet ([z, u] \bullet v) + [[z, u], v] \bullet w - w \bullet (v \bullet (u \bullet z)) + w \bullet (v \bullet (z \bullet u)).
\end{cases}
\] 
According to Sagle's identity, we have 
\begin{align*}
 0&= [[[u, v], w], z]+  [[[v, w], z], u]+[[[w, z], u], v]+[[[z, u], v], w]-[[u, w], [v, z]] \\&= K(u, v, w)z+K(v,w,z)u+K(w,z,u)v+K(z,u,v)w.
\end{align*}
\end{proof}
\ssbegin{Definition}
 Let $(\M, \br, \prs)$ be a pseudo-Euclidean Malcev algebra. We call $(\M, \br, \prs)$ a flat pseudo-Euclidean  Malcev algebra
if $K=0$.
\end{Definition}

 \begin{Remark}
Let $(\M, \br, \prs)$ be a flat pseudo-Euclidean Lie algebra, and let $\bullet$ denote its associated Levi-Civita product. Then, it is clear that the pair $(\M, \bullet)$ forms a pre-Malcev algebra. 

Conversely, let $(\A, \bullet, \prs)$ be a pre-Malcev algebra endowed with a symmetric non-degenerate bilinear form $\prs$ such that all left multiplications of $(\A, \bullet)$ are skew-symmetric with respect to $\prs$. Then, by Prop.~\ref{Sara}, the algebra $\A^-$ is a Malcev algebra. Moreover, the identity
\[
\langle u \bullet v, w \rangle = -\langle v, u \bullet w \rangle
\]
combined with the uniqueness of the Levi-Civita product on $(\A^-, \prs)$ implies that $\bullet$ is precisely the Levi-Civita product associated with the pseudo-Euclidean Malcev algebra $(\A^-, \prs)$. Consequently, $(\A^-, \prs)$ is a flat pseudo-Euclidean Malcev algebra.
\end{Remark}

\ssbegin{Remark}
  Let $(\M,\br,\prs)$ be a flat pseudo-Euclidean Malcev algebra, let $\bullet$ denote its Levi-Civita product, and let $\Ll$ denote the left multiplication associated with $\bullet$. Then $\Ll$ is a representation of the Malcev algebra $(\M, \br)$.
 \end{Remark}

\ssbegin{Definition}
   Let \((\G, \br, \prs)\) be a pseudo-Euclidean Lie algebra. We say that \((\G, \br, \prs)\) is flat its curvature operator $K_0$, defined by 
\[
K_0(u, v)w = \Ll_{[u, v]}w - [\Ll_u, \Ll_v]w
\]
is identically zero.
\end{Definition}
\ssbegin{Definition}
Let \( (\M, \br) \) be an algebra and \( B: \M \times \M \to \mathbb{K} \) a non-degenerate symmetric bilinear form. We say that \( B \) is a quadratic structure on \( \M \) if  for all \( u, v, w \in \M \), the following condition holds:
\[
B([u,v], w) + B(v, [u,w]) = 0.
\]
In this case, the triple \( (\M, \br, B) \) is called a quadratic algebra.
\end{Definition}

\ssbegin{Proposition}$($\cite{Medina}$)$\label{medina}
  Let $(\g, \br, B)$ be a quadratic non-abelian Lie algebra. Then $(\G, \br, B)$ is flat  as a Lie algebra if and only if $(\g, \br)$ is 2-step nilpotent.  
\end{Proposition}

\ssbegin{Proposition}\label{Quadratic}
    Let $(\M, \br, \prs)$ be a quadratic Malcev algebra. Then, $(\M, \br, \prs)$ is flat if and only if $(\M, \br)$ is 3-step nilpotent. 
\end{Proposition}

\begin{proof}
We assume that \((\M, \br, \prs)\) is a quadratic Malcev algebra. According to formula \eqref{lv}, we have for any \( u, v \in \M \),
\[
\Ll_u(v) = \frac{1}{2} \ad_u(v).
\]
Thus, for any \( u, v, w \in \M \) we have

\begin{align*}
    K(u,v,w) &= \Ll_{[[u, v], w]} - \Ll_{[u, w]} \circ \Ll_v - \Ll_u \circ \Ll_{[v, w]} + \Ll_v \circ \Ll_u \circ \Ll_w - \Ll_w \circ \Ll_v \circ \Ll_u\\
    &= \frac{1}{2} \ad_{[[u, v], w]} - \frac{1}{4} \ad_{[u, w]} \circ \ad_v - \frac{1}{4} \ad_u \circ \ad_{[v, w]} + \frac{1}{8} \ad_v \circ \ad_u \circ \ad_w - \frac{1}{8} \ad_w \circ \ad_v \circ \ad_u \\
    &= \frac{3}{8} \ad_{[[u, v], w]} - \frac{1}{8} \ad_{[u, w]} \circ \ad_v - \frac{1}{8} \ad_u \circ \ad_{[v, w]}\\
    &\quad + \frac{1}{8} \Big( \ad_{[[u, v], w]} - \ad_{[u, w]} \circ \ad_v - \ad_u \circ \ad_{[v, w]} + \ad_v \circ \ad_u \circ \ad_w - \ad_w \circ \ad_v \circ \ad_u \Big)\\
    &= \frac{3}{8} \ad_{[[u, v], w]} - \frac{1}{8} \ad_{[u, w]} \circ \ad_v - \frac{1}{8} \ad_u \circ \ad_{[v, w]}.
\end{align*}

Therefore, if \((\M, \br)\) is 3-step nilpotent, then \( K(u,v,w) = 0 \), which means that  \((\M, \br, \prs)\) is flat.
Conversely, assume that \((\M, \br, \prs)\) is flat, namely \( K(u,v,w) = 0 \) for all \( u,v,w \in \M \). Then we have the identity
\[
3[[[u, v], w],z] = [[u, w], [v,z]] - [[[v, w], z],u].
\]
It follows that
\begin{align*}
    3[[[u, v], w],z] &= [[u, w], [v,z]] - [[[v, w], z],u]\\
    &= [[u, w], [v,z]] - \frac{1}{3} \Big( [[v, z], [w,u]] - [[[w, z], u],v] \Big)\\
    &= \frac{2}{3} [[u, w], [v,z]] + \frac{1}{3} [[[w,z], u],v]\\
    &= \frac{2}{3} [[u, w], [v,z]] + \frac{1}{9} [[w, u], [z,v]] - \frac{1}{9} [[[z, u], v],w] \\
    &= \frac{7}{9} [[u, w], [v,z]] - \frac{1}{9} [[[z, u], v], w]\\
    &= \frac{7}{9} [[u, w], [v,z]] - \frac{1}{27} [[z,v],[u,w]] + \frac{1}{27} [[[u, v], w],z]\\
    &= \frac{20}{27} [[u, w], [v,z]] + \frac{1}{27} [[[u, v], w],z]
\end{align*}
Consequently we obtain 
\[
[[[u, v], w],z] = \frac{1}{4} [[u,w],[v,z]] = [[[v,w], z],u],
\]
and it follows that,
\[
\ad_{[[u, v], w]} = -\ad_u \circ \ad_{[v,w]} = \ad_v \circ \ad_{[u,w]}, \quad \text{and} \quad \ad_{[[u, v], w]} = \frac{1}{4} \ad_{[u,w]} \circ \ad_v.
\]
Since \( \ad_z \) is skew-symmetric for all \( z \in \M \), we conclude that
\[
\ad_{[[u, v], w]} = -\ad_{[u,w]} \circ \ad_v.
\]
Thus, \(\ad_{[[u,v],w]} = 0\), which implies that \((\M, \br)\) is 3-step nilpotent.
\end{proof}

Since every Lie algebra is a Malcev algebra,  we have the following proposition.

\ssbegin{Proposition}\label{flmal}  
Let \((\G, \br, \prs)\) be a pseudo-Euclidean Lie algebra. If \((\G, \br, \prs)\) is flat in the classical Lie algebra curvature sense, i.e., \( K_0 = 0 \), then it is also flat as a Malcev algebra, i.e., \( K = 0 \).  
\end{Proposition}  
\begin{proof}
 Let $u,v,w\in \G$. We have 
 \begin{align*}
K(u,v,w) &= K_0([u,v],w) + [K_0(u,v), \Ll_w] + [[\Ll_u, \Ll_v], \Ll_w] - K_0(u,w) \circ \Ll_v - [\Ll_u, \Ll_w] \circ \Ll_v \\
&\quad - \Ll_u \circ K_0(v,w) - \Ll_u \circ [\Ll_v, \Ll_w] + \Ll_v \circ \Ll_u \circ \Ll_w - \Ll_w \circ \Ll_v \circ \Ll_u \\
&= K_0([u,v],w) + [K_0(u,v), \Ll_w] - K_0(u,w) \circ \Ll_v - \Ll_u \circ K_0(v,w) \\
&\quad + \Ll_u \circ \Ll_v \circ \Ll_w - \Ll_v \circ \Ll_u \circ \Ll_w - \Ll_w \circ \Ll_u \circ \Ll_v + \Ll_w \circ \Ll_v \circ \Ll_u \\
&\quad - \Ll_u \circ \Ll_w \circ \Ll_v + \Ll_w \circ \Ll_u \circ \Ll_v - \Ll_u \circ \Ll_v \circ \Ll_w + \Ll_u \circ \Ll_w \circ \Ll_v \\
&\quad + \Ll_v \circ \Ll_u \circ \Ll_w - \Ll_w \circ \Ll_v \circ \Ll_u \\
&= K_0([u,v],w) + [K_0(u,v), \Ll_w] - K_0(u,w) \circ \Ll_v - \Ll_u \circ K_0(v,w)=0.\end{align*}\end{proof}
\ssbegin{Remark}
In contrast to Proposition~\ref{flmal}, we investigate whether the flatness of a pseudo-Euclidean Lie algebra in the Malcev sense guarantees its flatness in the classical Lie algebra curvature sense. The answer is negative. Indeed, there exist pseudo-Euclidean Lie algebras that are flat as Malcev algebras but are not flat as Lie algebras.

For instance, consider a quadratic 3-step nilpotent Lie algebra $(\mathfrak{g}, \br, B)$. By Prop.~\ref{Quadratic}, $(\mathfrak{g}, \br, B)$ is flat as a Malcev algebra, meaning that the curvature operator $K$ vanishes. However, by Prop.~\ref{medina}, the same Lie algebra $(\mathfrak{g}, \br, B)$ is not flat in the classical Lie algebra curvature sense, since its corresponding curvature operator $K_0 \neq 0$. This example illustrates that the class of flat pseudo-Euclidean Malcev  algebras is strictly larger and that flatness in the Malcev sense does not, in general, imply flatness in the classical Lie algebra curvature sense.
 
\end{Remark}

Let $(\M, \br, \prs)$ be a pseudo-Euclidean Malcev algebra. Then $K=0$ is also
equivalent to one of the following relations:
\begin{equation}
\begin{aligned}\label{re1}
	&\Rr_u\circ\Rr_v\circ\Rr_w-\Rr_u\circ\Rr_v\circ\Ll_w-\Rr_u\circ\Ll_v\circ\Rr_w+\Rr_u\circ\Ll_v\circ\Ll_w-\Rr_{w\bullet u}\circ\Rr_v\\
	&+\Rr_{w\bullet u}\circ\Ll_v-\Rr_{[w, v]\bullet u}+\Ll_w\circ \Rr_{v\bullet u}-\Ll_v\circ\Ll_w\circ\Rr_u=0,
\end{aligned}
\end{equation}
   \begin{equation}
\begin{aligned}\label{re2}
     &\Rr_u\circ\Rr_v\circ\Ll_w-\Rr_u\circ\Rr_v\circ\Rr_w-\Rr_u\circ\Ll_v\circ\Ll_w-\Ll_w\circ\Rr_u\circ\Rr_v\\
     &+\Ll_w\circ\Rr_u\circ\Ll_v+\Rr_{w\bullet(v\bullet u)}-\Ll_v\circ \Rr_{w\bullet u}+\Rr_u\circ\Ll_v\circ\Rr_w-\Ll_{[w,v]}\circ\Rr_u=0       \end{aligned}
    \end{equation}
     \begin{equation}
\begin{aligned}\label{re3}
	&\Rr_u\circ\Ll_{[w,v]}-\Rr_u\circ\Rr_{[w,v]}-\Rr_{v\bullet u}\circ\Ll_w+\Rr_{v\bullet u}\circ\Rr_w-\Ll_w\circ\Rr_u\circ\Ll_v\\ 
	&+\Ll_w\circ\Rr_u\circ\Rr_v+\Ll_v\circ\Ll_w\circ\Rr_u-\Rr_{v\bullet(w\bullet u)}=0,
\end{aligned}
    \end{equation}
for all $u,v,w\in \M$. We denote by \( Z(\mathcal{M}) \) its center and by \( [\mathcal{M}, \mathcal{M}] \) its derived ideal. Here are the expressions we get:
\begin{equation}
  (\mathcal{M} \bullet \mathcal{M})^\perp = \{ u \in \mathcal{M} \mid \Rr_u = 0 \}\quad  \text{and}  \quad  
  [\mathcal{M}, \mathcal{M}]^\perp = \{ u \in \mathcal{M} \mid \Rr_u^* = \Rr_u \}.
\label{Ortho}
\end{equation}  
where \(\mathcal{M} \bullet \mathcal{M} = \operatorname{span} \{ u \bullet v \mid u, v \in \mathcal{M} \}\).

\ssbegin{Proposition}\label{nilpotent}
	Let $(\M, \br,\prs)$ be a flat pseudo-Euclidean Malcev algebra. We have the following results:
	\begin{enumerate}
    \item $\Ll_u^3=0$  for all $u\in Z(\M)$.  In particular, if $\prs$ is Euclidean, then $$Z(\M)=\{u\in\M\mid \Ll_u=\Rr_u=0\}.$$
		\item For any $u\in [\M,\M]^\bot$, $\Rr_u$ is nilpotent.
		\item If $\prs$ is Euclidean, then $\Rr_u=0$  for any $u\in [\M,\M]^\bot.$ In particular,  $\ad_u$ is skew-symmetric for any $u\in [\M,\M]^\bot.$
	\end{enumerate}
	\end{Proposition}
	
    \begin{proof}
    \label{rapphar6}
       \begin{enumerate}
       
    \item  Let \( u \in Z(\M) \). By using equation~\eqref{lv} we have \( u \bullet u = 0 \). Since $\Ll_u=\Rr_u$,  using the relation \eqref{re1}, we obtain $\Ll_u^3=0$. Since $\prs$ is Euclidean, $\Ll_u$ is skew-symmetric and nilpotent, therefore $\Ll_u=\Rr_u=0$.
   
\item Let \( u \in [\M, \M]^\bot \). By using equation~\eqref{lv}, we have \( u \bullet u = 0 \). Then using  the relation \eqref{re1}, we obtain the following system
\begin{equation}
    \Rr_u^3-\Rr_u^2\circ\Ll_u-\Rr_u\circ\Ll_u\circ\Rr_u+\Rr_u\circ\Ll_u^2-\Ll_u^2\circ\Rr_u= 0, 
    \label{re5}\end{equation}
 Since $\Rr_u$ is symmetric, then we have    
 \begin{equation}
    \Rr_u^3+\Ll_u\circ\Rr_u^2+\Rr_u\circ\Ll_u\circ\Rr_u+\Ll_u^2\circ\Rr_u-\Rr_u\circ\Ll_u^2= 0, 
    \label{re6}\end{equation}
From the relations \eqref{re5} and  \eqref{re6}, we deduce that 
\begin{equation*}
\Rr_u^3+\Ll_u\circ\Rr_u^2=-\Rr_u^3+\Rr_u^2\circ\Ll_u \end{equation*}
Accordingly  \begin{equation}
2\Rr_u^3=\Rr_u^2\circ\Ll_u -\Ll_u\circ\Rr_u^2=[\Rr_u^2, \Ll_u].\end{equation}
For any $n\in\mathbb{N}$, we have $2\Rr_u^{3+n}=\Rr_u^n\circ(\Rr_u^2\circ\Ll_u -\Ll_u\circ\Rr_u^2)$, so
\begin{align*}
2\tr(\Rr_u^{3+n})&=\tr\big(\Rr_u^n\circ(\Rr_u^2\circ\Ll_u -\Ll_u\circ\Rr_u^2)\big) \\&= \tr\big(\Rr_u^{n+2}\circ\Ll_u\big)-\tr\big(\Rr_u^n\circ\Ll_u\circ\Rr_u^2)\big)= \tr\big(\Rr_u^{n+2}\circ\Ll_u\big)-\tr\big(\Rr_u^{n+2}\circ\Ll_u)\big)\\&=0.\end{align*}
Thus we get that $\tr(\Rr_u^{3+n})=0$ for any $n\in \mathbb{N}$, which implies that \( \Rr_u \) is nilpotent.
\item By item (2), $\mathrm{R}_u$ is nilpotent for all $u\in [\M, \M]^\bot$, and by \eqref{Ortho}, $\mathrm{R}_u$ is symmetric with respect to $\prs$. Since $\prs$ is Euclidean, then $\Rr_u=0$. Consequently, $\ad_u=\Ll_u$ is skew-symmetric.
\end{enumerate}
\end{proof}
We recall an important result, known as Milnor's theorem, characterizing flat Euclidean Lie algebras:  
\ssbegin{Theorem} [\cite{Milnor}]
Let  \( (\G, \br, \prs) \) be a Euclidean Lie algebra.  Then  \( (\G, \br, \prs) \) is flat if and only if \( \G \) splits as  
$
\G = [\G, \G]^\perp \oplus [\G, \G],
$  
where \( [\G, \G]^\perp \) and \( [\G, \G] \) are abelian, and \( \mathrm{ad}_u \) is skew-symmetric for any \( u \in [\G, \G]^\bot \).  
\end{Theorem}
We are thus led to consider whether Milnor's theorem extends to flat Euclidean Malcev algebras, a generalization that would provide valuable insights into the geometric structure of Malcev algebras, that would illuminate these algebras compared to Euclidean Lie algebras. We now present a complete solution to this problem for flat Euclidean solvable Malcev algebras, deriving the conditions under which Milnor's fundamental result holds in this generalized framework.

 \ssbegin{Theorem} \label{EMalcev}
 Let \( (\M, \br, \prs) \) be a Euclidean solvable Malcev algebra.  Then \( (\M, \br, \prs) \) is flat if and only if \( \M \) splits as  
$
\M = [\M, \M]^\perp \oplus [\M, \M],
$  
where \( [\M, \M]^\perp \) and \( [\M, \M] \) are abelian, and \( \mathrm{ad}_u \) is skew-symmetric for any \( u \in [\M, \M]^\bot \). Furthermore, the dimension of $[\M,\M]$ is even.  
\end{Theorem}
\begin{proof}
Assume that \( (\M, \br, \prs) \) is a flat Euclidean solvable Malcev algebra. According to Proposition \ref{nilpotent}, we have \( \Rr_u = 0 \) for all \( u \in [\M, \M]^\bot \). It follows that for all \( v \in [\M, \M] , w\in \M\) and \( u \in [\M, \M]^\bot \), we have:

\[
\langle v \bullet w, u \rangle = -\langle w, v \bullet u \rangle = 0, \esp \langle w \bullet v, u \rangle = -\langle v, w \bullet u \rangle = 0,
\]
This implies that \( \mathcal{M} \) splits as:
$$
\mathcal{M} = [\mathcal{M}, \mathcal{M}]^\perp \oplus [\mathcal{M}, \mathcal{M}],
$$  where \( [\mathcal{M}, \mathcal{M}]^\perp \) is abelian and \( [\mathcal{M}, \mathcal{M}] \) is a two-sided ideal for the Levi-Civita product. \\It follows that \( ([\mathcal{M}, \mathcal{M}], \bullet_{\mid_{[\mathcal{M}, \mathcal{M}]}}, \prs_{\mid_{[\mathcal{M}, \mathcal{M}] \times [\mathcal{M}, \mathcal{M}]}} )\) is a flat Euclidean Malcev algebra. Since \( \mathcal{M} \) is solvable, we know that \( [\mathcal{M}, \mathcal{M}] \) is nilpotent (see \cite{Elduque}). Let \( \mathcal{M}^{'} = [\mathcal{M}, \mathcal{M}] \), and define \( \bullet^{'} = \bullet_{\mid [\mathcal{M}, \mathcal{M}]} \) for the Levi-Civita product associated to \( (\mathcal{M}^{'}, \br, \prs^{'}) \). \\
 Let \( u \in Z(\mathcal{M}^{'}) \), according to Proposition \ref{nilpotent}, we have  \( \Ll^{'}_u = 0 \) and \( \Rr^{'}_u = 0 \). Hence we have   \( Z(\mathcal{M}^{'}) \subset [\mathcal{M}^{'}, \mathcal{M}^{'}]^\perp \) (the orthogonal complement with respect to \( \prs^{'} \)), and we deduce that
$$
Z(\mathcal{M}^{'}) \cap [\mathcal{M}^{'}, \mathcal{M}^{'}] \subset Z(\mathcal{M}^{'}) \cap Z(\mathcal{M}^{'})^\perp. $$
 If \( \mathcal{M}^{'} \) is not abelian, we must have \( Z(\mathcal{M}^{'}) \cap Z(\mathcal{M}^{'})^\perp \neq \{0\} \), which contradicts the fact that \( \prs \) is Euclidean. Therefore, we conclude that
\[
\mathcal{M} = [\mathcal{M}, \mathcal{M}]^\perp \oplus [\mathcal{M}, \mathcal{M}],
\]
where \( [\mathcal{M}, \mathcal{M}]^\perp \) is abelian, \( [\mathcal{M}, \mathcal{M}] \) is abelian, and for any \( u \in [\mathcal{M}, \mathcal{M}]^\perp \), \( \ad_u = \Ll_u \) is skew-symmetric. 
Conversely, if \( \mathcal{M} \) splits as above, then it is easy to check that \( (\mathcal{M}, \br, \prs) \) is flat. 
 We have that  $$ [\mathcal{M}, \mathcal{M}]^\perp = \{ u \in \mathcal{M} \mid \Rr_u = 0 \}, $$ and for any \( v \in Z(\mathcal{M})\),  \( \Ll_v = \Rr_v = 0 \), which implies that we have \( Z(\mathcal{M}) \subset [\mathcal{M}, \mathcal{M}]^\perp \). \\ Let \( \{e_1, \dots, e_p\} \) be a basis of \( [\mathcal{M}, \mathcal{M}]^\perp \). Next we will prove that  \( \ker \, \ad_{e_1} \) has even codimension in \( [\mathcal{M}, \mathcal{M}] \). Since \( \ad_{e_1} \) and \( \ad_{e_2} \) commute, the intersection \( \ker \, \ad_{e_1} \cap \ker \, \ad_{e_2} \) has even codimension in \( \ker \, \ad_{e_1} \), and thus in $ [\mathcal{M}, \mathcal{M}] $.  
 
 It follows that the subspace \( K = [\mathcal{M}, \mathcal{M}] \cap \left( \bigcap_{i=1}^p \ker \, \ad_{e_i} \right) \) has even codimension in \( [\mathcal{M}, \mathcal{M}] \). Since \( K \subset Z(\mathcal{M}) \), we deduce that \( K = \{0\} \). Therefore, \( \dim [\mathcal{M}, \mathcal{M}] \) is even as we claimed.
\end{proof}
\ssbegin{Corollary}\label{co1}
   Let $(\M, \br,\prs)$ be a flat  Euclidean solvable Malcev algebra. Then \( (\M, \br, \prs) \) is a flat Euclidean Lie algebra, i.e., \( (\M, \br, \prs) \) is a Euclidean Lie algebra and \((\M, \br, \prs) \) is flat as a Lie algebra.
\end{Corollary}
\ssbegin{Corollary}\label{Nul}
Let $(\M, \br,\prs)$ be a flat  Euclidean nilpotent Malcev algebra. Then $(\M, \br)$ is abelian.    
\end{Corollary}
In Corollary \ref{co1},  
we have shown that any flat Euclidean solvable Malcev algebra is also flat as a Euclidean Lie algebra. This raises the following question: can this result be generalized to all flat Euclidean Malcev algebras? In other words, can we assert that any flat Euclidean Malcev algebra is both a Lie algebra and flat as a Lie algebra?
Based on our analysis, we propose the following open question:

\textbf{Open question:}
   An Euclidean Malcev algebra is flat if and only if it is a Lie algebra and flat as a Lie algebra.

\section{ Double extension of flat pseudo-Euclidean Malcev algebras }\label{s3}
This section presents the double extension construction for flat pseudo-Euclidean Malcev algebras, a process that was first introduced in the study of quadratic Lie algebras by A. Medina and P. Revoy in \cite{Medina1}. 

\subsection{Central extension of pre-Malcev algebras}
Let $(\A, \bullet)$ be a pre-Malcev algebra,  let $\h := \K e$ be a one-dimensional vector space, and let $\mu : \A \times \A \rightarrow \K$ be a bilinear map.  
We define a new product $\diamond$ on the vector space
$
\widetilde{\A} := \A \oplus \K e
$
as follows:
\[
(u+\alpha e)\diamond(v+\beta e) := u \bullet v + \mu(u,v)e,
\qquad \forall\, u,v \in \A,\; \alpha, \beta \in \K.
\]
Thus,   $(\widetilde{\A}, \diamond)$ is a  pre-Malcev algebra if and only if the following condition holds (for all $u,v,w, z\in \A$):
\begin{equation}
\mu([[u, v]_\bullet, w]_\bullet, z) = \mu([u, w]_\bullet, v \bullet z) + \mu(u, [v, w]_\bullet \bullet z) + \mu(w, v \bullet (u \bullet z)) - \mu(v, u \bullet (w \bullet z)). \label{con-left}
\end{equation}
  
In this case, $(\widetilde{\A}, \diamond)$ is called the \emph{central extension} of $(\A, \bullet)$ by means of  $\mu$.
\subsection{Almost semi-direct product of pre-Malcev algebras}

Let $(\A, \bullet)$ be a pre-Malcev algebra. Let $\mathcal{V} := \K d$ be a one-dimensional vector space, let $
\de, \xi : \A \rightarrow \A, 
$ be two endomorphisms, and let $b_0 \in \A$, and let $\lambda \in \mathbb{K}$ be a scalar.  
On the vector space 
$
\overline{\A} := \K d \oplus \A,
$
we define a new product $\bar{\bullet}$ as follows:
\[
d \bar{\bullet} d := \lambda d + b_0, \qquad
d \bar{\bullet} u := \de(u), \qquad
u \bar{\bullet} d := \xi(u), \qquad
u \bar{\bullet} v := u \bullet v,
\]
for all $u,v \in \A$.  

Then $(\overline{\A}, \bar{\bullet})$ is a pre-Malcev algebra if and only if the following conditions are satisfied (for all $u,v, w \in \A$): 
\begin{equation}
\begin{cases}
\xi([[u, v]_\bullet, w]_\bullet)
=[u,w]_\bullet \bullet \xi(v)+u\bullet\xi([v,w]_\bullet)-v\bullet(u\bullet \xi(w))+w\bullet(v\bullet \xi(u)), \\[0.4em] 
(\xi-\de)([u,v]_\bullet)\bullet w=(\xi-\de)(u)\bullet (v\bullet w)+u\bullet ((\xi-\de)(v)\bullet w)-v\bullet(u\bullet\de(w))+\de(v\bullet(u\bullet w)),\\[0.4em]
\xi((\xi-\de)([u,v]_\bullet))= (\xi-\de)(u)\bullet \xi(v)+u\bullet \xi((\xi-\de)(v))-v\bullet(u\bullet b_0)-\lambda v\bullet\xi(u)+\de(v\bullet\xi(u)),\\[0.4em] 
[(\xi-\de)(u), v]_\bullet \bullet w=(\xi-\de)(v)\bullet(u\bullet w)-\de([u,v]_\bullet \bullet w)+u\bullet \de(v\bullet w)-v\bullet(u\bullet \de(w)),\\[0.4em] [(\xi-\de)(u), v]_\bullet \bullet w=[u,v]_\bullet \bullet\de(w)-u\bullet ((\xi-\de)(v)\bullet w)-\de(u\bullet(v\bullet w))+v\bullet\de(u\bullet w)),
\\[0.4em]
\xi([(\xi-\de)(u), v]_\bullet) =(\xi-\de)(v)\bullet\xi(u)-\de(\xi([u,v]_\bullet))+u\bullet \de(\xi(v))-v\bullet(u\bullet b_0)-\lambda v\bullet \xi(u),\\[0.4em]
\xi([(\xi-\de)(u), v]_\bullet) =[u,v]_\bullet \bullet b_0+\la \xi([u,v])-u\bullet\xi(\xi-\de)(v))-\de(u\bullet\xi(v))+v\bullet\de(\xi(u)),
\\[0.4em] (\xi-\de)^2(u)\bullet v=\de((\xi-\de)(u)\bullet v)+u\bullet\de^2(v)-\de(u\bullet \de(v)), \\[0.4em] (\xi-\de)^2(u)\bullet v=(\xi-\de)(u)\bullet \de(v)-\de(u\bullet\de(v))+\de^2(u\bullet v), \\[0.4em] \xi((\xi-\de)^2(u))=\de(\xi(\xi-\de)(u))-\lambda \de(\xi(u))-\de(u\bullet b_0)+\lambda u\bullet b_0+u\bullet\de(b_0), \\[0.4em] \xi((\xi-\de)^2(u))=(\xi-\de)(u)\bullet b_0+\la\xi((\xi-\de)(u))-\de(u\bullet b_0)-\la \de(\xi(u))+\de^2(\xi(u)),\\[0.4em] (\xi-\de)(u)\bullet \de(v)+\de((\xi-\de)(u)\bullet v)+\de^2(u\bullet v)-u\bullet \de^2(v)=0,\\[0.4em](\xi-\de)(u)\bullet b_0-\la \xi((\xi-\de)(u))+\de(\xi((\xi-\de)(u)))+\de^2(\xi(u))-u\bullet \de(b_0)-\la\xi(u)=0
.
\end{cases}
\label{produit-semi}
\end{equation}
\ssbegin{Definition}
If $(\de, \xi, b_0, \lambda)$ satisfies the compatibility conditions \eqref{produit-semi},  
then the pre-Malcev algebra $(\overline{\A}, \bar{\bullet})$ is called the 
\emph{almost semi-direct product} of the pre-Malcev algebra $(\A,\bullet)$ 
by the one-dimensional vector space $\K d$ by means of $(\de, \xi, b_0, \lambda)$.  
In this case, the collection  
$
(\mathcal{V}, \A, \de, \xi, b_0, \lambda)
$
is referred to as a \emph{context of an almost semi-direct product of pre-Malcev algebras} and we say that $(\de, \xi, b_0, \lambda)$ is 
\emph{admissible on $\A$}.

\end{Definition}

\subsection{Flat  double extensions of flat pseudo-Euclidean Malcev algebras}
In this paragraph,  we introduce the notion of a {\it flat double extension} of flat pseudo-Euclidean Malcev algebras.

Let $(\M, \br_\M, \prs_\M)$ be a flat pseudo-Euclidean Malcev algebra. Denote by $\bullet$ the Levi-Civita product associated with it.  
Let $\mathcal{V} = \K d$ be a one-dimensional vector space and $\mathcal{V}^* = \K e$ be its dual.  

In order to introduce the process of flat double extension by $\mathcal{V}$, we consider  linear maps $
\xi, D: \M \rightarrow \M,
$ an element $b_0 \in \M$, and a scalar $\lambda \in \K$.  
Set $\delta := \xi + D$. We extend the linear maps $\delta$ and $\xi$ to 
$
\widetilde{\de}, \widetilde{\xi} : \M \oplus \mathcal{V}^* \rightarrow \M \oplus \mathcal{V}^*,
$
as follows:   
\begin{equation}
\label{eq:double-tilde}
\begin{array}{c}
\widetilde{\de}(u + \alpha e)
:= (\xi+D)(u) + ( \langle b_0, u\rangle_\M+\la)\, e, \quad   
\widetilde{\xi}(u + \alpha e)
:= \xi(u)\,.
\end{array}
\end{equation}
for all $u \in \M$ and $\alpha \in \K$. We also choose $\widetilde{b}_0
:=- b_0 \in \M$.

\medskip

\sssbegin{Lemma} \label{Lemma1}
The space $\mathcal{H} := \M \oplus \mathcal{V}^*$ equipped with the product
\[
(u + \alpha e) \diamond (v + \beta e)
:= u \star v - \langle \xi(u), v\rangle_\M\, e,
\qquad \forall\, u,v \in \M,\; \alpha, \beta \in \K,
\]
is a pre-Malcev algebra if and only if
\begin{equation}
\xi([[u, v]_\M, w]_\M)
=[u,w]_\M \bullet \xi(v)+u\bullet\xi([v,w]_\M)-v\bullet(u\bullet \xi(w))+w\bullet(v\bullet \xi(u)),
\label{eq:xi-condition0}
\end{equation}
for every $u,v, w \in \M$.
\end{Lemma} 

\begin{proof} 
The algebra $(\mathcal{H}, \diamond)$ is pre-Malcev  if and only if the bilinear map 
\[
\mu(u,v) := - \langle \xi(u), v\rangle_\M\, e
\]
satisfies Eq. \eqref{con-left}, which is equivalent to Eq.  \eqref{eq:xi-condition0}. 
\end{proof}

Now, let us also assume that $(\mathcal{V}, \M, D+\xi, \xi, - b_0,-\lambda)$  is a context of almost semi-direct product 
of pre-Malcev algebras.

\medskip

\begin{Lemma}\label{Lemma2}
$(V, \mathcal{H} :=\M \oplus V^*, \widetilde{\delta}, \widetilde{\xi}, \widetilde{b_0}, -\lambda)$ 
is a context of almost semi-direct product of pre-Malcev algebras if and only if, 
for all $u,v \in\M$, the following system holds:
\begin{equation}\label{eq:claim2}
\left\{
\begin{array}{l}  
\ad_u^*\circ\xi^*\circ\xi(v)
+\ad_v^*\circ\xi^*\circ\xi(u)
+\xi(u\bullet\xi(v))
+\xi(v\bullet\xi(u))=0,
\\[0.4em] 
\xi(D([u,v])=(\xi+D)(u\bullet\xi(v))-v\bullet(\xi(D(u))-D(v)\bullet\xi(u),    \\[0.4em]  \xi(D([u,v])=(\xi+D)(u\bullet\xi(v))-u\bullet\xi(D(v))-v\bullet(\xi+D)(\xi(u))+[u,v]\bullet b_0+\la \xi([u,v]),    \\[0.4em] \xi([D(u),v])=D(v)\bullet\xi(u)+v\bullet(u\bullet b_0)-u\bullet (D+\xi)(\xi(v))-\la v\bullet \xi(u),  \\[0.4em]\xi^*\circ \xi\circ D+D^*\circ \xi^*\circ \xi-\xi^*\circ\Rr_{b_0}-\Rr_{b_0}^*\circ \xi-2\la \xi^*\circ \xi=0,
 \\[0.4em] D^*\circ\xi^*\circ \xi+\xi^*\circ(D+\xi)\circ\xi-\xi^*\circ\Rr_{b_0}+\Rr_{\xi^*(b_0)}-\Rr_{\xi^*(b_0)}^*-\la\xi^*\circ\xi=0,  \\[0.4em] (D+\xi)^2\circ\xi-\xi\circ D^2+ (D+\xi)\circ\Rr_{b_0}-\Rr_{b_0}\circ D+\la (D+\xi)\circ\xi-\la\xi\circ D=0, \\[0.4em] (D+\xi)\circ\xi\circ D-\xi\circ D^2+(D+\xi)\circ\Rr_{b_0}+\Rr_{(D+\xi)(b_0)}-\la(D+\xi)\circ \xi-\la^2\xi-\la\Rr_{b_0}=0,\\[0.4em] \xi^*\circ(D+\xi)(b_0)+D^*\circ \xi^*(b_0)-2\la \xi^*(b_0)-\Rr^*_{b_0}(b_0)=0,\\[0.4em](D+\xi)\circ \xi\circ D-\xi\circ(D+\xi)\circ D-\la \xi\circ D -\Rr_{b_0}\circ D+\la^2\xi+  \Rr_{b_0}    +\la \Rr_{(D+\xi)(b_0)}=0,  
\\[0.4em]
2\xi^*((D+\xi)(b_0))=\la b_0.
\end{array}
\right.
\end{equation}
\end{Lemma}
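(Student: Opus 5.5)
The plan is a direct substitution into the compatibility system \eqref{produit-semi}. By definition, $(V,\mathcal{H},\widetilde{\delta},\widetilde{\xi},\widetilde{b_0},-\lambda)$ is a context of almost semi-direct product exactly when the thirteen identities of \eqref{produit-semi} hold on the pre-Malcev algebra $(\mathcal{H},\diamond)$ of Lemma~\ref{Lemma1}, with $\bullet$ replaced by $\diamond$, $\de$ by $\widetilde{\de}$, $\xi$ by $\widetilde{\xi}$, $b_0$ by $-b_0$ and $\lambda$ by $-\lambda$. I will check each identity on arguments $u+\alpha e$, $v+\beta e$, $w+\gamma e$ of $\mathcal{H}$.

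The first reduction uses the structure of $\diamond$ and of the tilde maps. The vector $e$ is annihilating for $\diamond$ on both sides, $\widetilde{\xi}(e)=0$, and $\widetilde{\de}$ sends $e$ into $\K e$. I read the formula \eqref{eq:double-tilde} so that the $e$-coefficient of $\widetilde{\de}(u+\alpha e)$ is $\langle b_0,u\rangle_\M+\lambda\alpha$; this is the only linear reading. With these facts, every term containing $e$ as an argument either vanishes or only feeds the $e$-component. Hence each identity reduces to arguments $u,v,w\in\M$, and then splits into an $\M$-component and an $e$-component. For this I write $x\diamond y=x\bullet y-\langle\xi(x),y\rangle e$, $[x,y]_\diamond=[x,y]_\M-\langle\xi(x),y\rangle e+\langle\xi(y),x\rangle e$ and $\widetilde{\de}(x)=(\xi+D)(x)+\langle b_0,x\rangle e$.

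The $\M$-components are the identities \eqref{produit-semi} for $(D+\xi,\xi,-b_0,-\lambda)$ on $\M$, corrected by the contributions $\widetilde{\de}(\text{$e$-part})$, which are multiples of $\lambda$. Since $(\mathcal{V},\M,D+\xi,\xi,-b_0,-\lambda)$ is assumed to be a context, the uncorrected parts vanish identically. After using $\de-\xi=D$, the corrections produce the second, third and fourth lines of \eqref{eq:claim2}, namely the identities involving $\xi\circ D$, $[u,v]\bullet b_0$ and $\lambda\xi([u,v])$.

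The $e$-components are scalar identities of the form $\langle \xi(x),y\rangle+\langle b_0,z\rangle+\dots=0$. I convert each one into an operator identity by pairing against the free vector and moving operators across with the non-degenerate form, using three rules: $\Ll_u^*=-\Ll_u$, $\langle x\bullet y,z\rangle=\langle \Rr_y x,z\rangle$, and $\ad_u^*$ for brackets. The first condition of \eqref{produit-semi} (the one for $\xi$ with a triple bracket) gives, through the term $-\langle\xi(\cdot),\cdot\rangle$ inside $\widetilde{\xi}$ applied to brackets, the symmetrized first line of \eqref{eq:claim2} with $\ad_u^*\circ\xi^*\circ\xi$. The conditions involving $(\xi-\de)^2$ and $b_0$ give the lines with $\xi^*\circ\xi\circ D$, $D^*\circ\xi^*\circ\xi$, $\Rr_{b_0}$ and $\Rr_{\xi^*(b_0)}$, where $\langle b_0,\cdot\rangle$ becomes $\Rr_{b_0}^*$ or $\Rr_{\xi^*(b_0)}$. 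The last two conditions, evaluated with $d\,\bar\bullet\, d=-\lambda d-b_0$, give the vector identities $\xi^*(D+\xi)(b_0)+D^*\xi^*(b_0)-2\lambda\xi^*(b_0)-\Rr^*_{b_0}(b_0)=0$ and $2\xi^*((D+\xi)(b_0))=\lambda b_0$.

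Conversely, every step is an equivalence, since the splitting into components and the passage through $\prs$ are reversible by non-degeneracy. This gives the ``if and only if''.

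The main obstacle is bookkeeping. Several identities of \eqref{produit-semi} come in pairs with the same left-hand side, and I must make sure their $e$-components do not collapse to the same condition, since they yield distinct lines of \eqref{eq:claim2}. I also expect the signs coming from $\widetilde{b_0}=-b_0$ and from the eigenvalue $-\lambda$ to be the most error-prone part. I would therefore organize the computation by first tabulating $\widetilde{\de}$, $\widetilde{\xi}$, $[\cdot,\cdot]_\diamond$ and $\diamond$ on $\M$ together with their $e$-parts, and only then substitute into the identities one at a time.
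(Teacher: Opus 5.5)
Your strategy is the paper's: substitute $\diamond$, $\widetilde{\de}$, $\widetilde{\xi}$, $-b_0$, $-\lambda$ into the thirteen identities of \eqref{produit-semi}, reduce to arguments in $\M$, and split each identity into its $\M$- and $e$-components. The $\M$-components are discharged by the hypothesis that $(D+\xi,\xi,-b_0,-\lambda)$ is admissible on $\M$, and the $e$-components are dualized through $\langle\;,\;\rangle_\M$. Your reading $\widetilde{\de}(u+\alpha e)=(\xi+D)(u)+(\langle b_0,u\rangle_\M+\lambda\alpha)e$ is also the right one, since it matches $d\star e=\lambda e$.

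However, one structural claim in your plan is false: the $\M$-components receive no correction at all. We have $\widetilde{\de}(e)=\lambda e\in\K e$ and $\widetilde{\xi}(e)=0$, and $\diamond$ ignores $e$-parts. So nothing in $\K e$ is ever sent back into $\M$. The $\M$-component of each identity is therefore verbatim the corresponding identity for $(D+\xi,\xi,-b_0,-\lambda)$ on $\M$, and it yields nothing beyond the hypothesis. The $\lambda$-terms you have in mind live in the $e$-component; an example is the term $-\lambda\langle\xi(v),u\bullet w\rangle_\M e$ coming from $\widetilde{\de}(v\diamond(u\diamond w))$. In any case, lines 2--4 of \eqref{eq:claim2} contain $\lambda$-free terms such as $\xi(D([u,v]))$ and $(\xi+D)(u\bullet\xi(v))$, so they cannot arise as $\lambda$-multiples. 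Followed literally, your plan never produces lines 2--4.

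The correct bookkeeping, which is the paper's, is that every line of \eqref{eq:claim2} is a dualized $e$-component. The number of free arguments in an identity dictates the shape of the line it produces:
\begin{itemize}
\item The three-argument identities (the first, second, fourth and fifth) give vector identities bilinear in $u,v$ after pairing against the free $w$. These are lines 1, 2, 3, 4; in particular the paper gets lines 2--4 from the second, fourth and fifth identities.
\item The two-argument identities (the third, sixth to ninth, and twelfth) give the operator identities, lines 5--8 and 10.
\item The one-argument identities (the tenth, eleventh and thirteenth) give the vector identities in $b_0$. Line 9 comes from the tenth and eleventh, not from the last two conditions.
\end{itemize}

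Your description of line 1 has the same defect. $\widetilde{\xi}$ kills $\K e$, so the left-hand side has no $e$-component. The contribution comes from the $\diamond$-products on the right; for example, $[u,w]_\diamond\diamond\xi(v)$ contributes $-\langle\xi([u,w]_\M),\xi(v)\rangle_\M$. When you dualize, $\langle\xi(v),u\bullet\xi(w)\rangle_\M=-\langle\xi^*(u\bullet\xi(v)),w\rangle_\M$. So the last two terms of that line come out with $\xi^*$ rather than $\xi$. Be prepared to reconcile this with the displayed first line rather than assume the match.
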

\begin{proof}
 Let $u, v, w \in \M$ and $\alpha, \beta \in \mathbb{K}$. 
A direct computation shows that the first identity in the system \eqref{produit-semi}
is equivalent to
\[
\xi([[u, v]_\M, w]_\M)
=[u,w]_\M \bullet \xi(v)+u\bullet\xi([v,w]_\M)-v\bullet(u\bullet \xi(w))+w\bullet(v\bullet \xi(u)),\] 
and  
$$\ad_u^*\circ \xi^*\circ\xi(v)+\ad_v^*\circ \xi^*\circ\xi(u)+\xi(u\bullet\xi(v))+\xi(v\bullet\xi(u))=0.$$

Moreover, the second relation in the system \eqref{produit-semi} is equivalent to
\[
D([u,v])\bullet w=D(u)\bullet(v\bullet w)+u\bullet(D(v)\bullet w)+v\bullet(u\bullet(\xi+D)(w))-(\xi+D)(v\bullet (u\bullet w))+u\bullet(v\bullet b_0),
\]
and
\[
\xi(D([u,v]))=(\xi+D)(u\bullet\xi(v))-v\bullet(\xi(D(u))-D(v)\bullet\xi(u),
\]

Furthermore, the third relation in the system \eqref{produit-semi} is equivalent to
\[
\xi(D([u,v]))=D(u)\bullet \xi(v)+u\bullet(\xi(D(v))-(\xi+D)(v\bullet \xi(u))-v\bullet(u\bullet b_0))-\la v\bullet\xi(u),
\]
and
\[
\xi^*\circ \xi\circ D+D^*\circ \xi^*\circ \xi-\xi^*\circ\Rr_{b_0}-\Rr_{b_0}^*\circ \xi-2\la \xi^*\circ \xi=0.
\]

Similarly, the fourth relation in the system \eqref{produit-semi} is equivalent to
\[[D(u),v]\bullet w=D(v)\bullet(u\bullet w)+(D+\xi)([u,v]\bullet w)-u\bullet(D+\xi)(v\bullet w)+v\bullet(u\bullet(D+\xi)(w))
\]
and 
\[\xi(D([u,v]))=(\xi+D)(u\bullet\xi(v))-u\bullet\xi(D(v))-v\bullet(\xi+D)(\xi(u))+[u,v]\bullet b_0+\la \xi([u,v])\]

Likewise, the fifth relation in the system \eqref{produit-semi} is equivalent to
\[[D(u),v]\bullet w=(D+\xi)(u\bullet(v\bullet w))-[u,v]\bullet (D+\xi)(w)-u\bullet(D(v)\bullet w)-v\bullet(D+\xi)(u\bullet w),
\]
and 
\[\xi([D(u),v])=D(v)\bullet\xi(u)+v\bullet(u\bullet b_0)-u\bullet (D+\xi)(\xi(v))-\la v\bullet \xi(u)\]

Moreover, the sixth relation in the system \eqref{produit-semi} is equivalent to
\[\xi([D(u),v])=D(v)\bullet \xi(u)+(D+\xi)(\xi([u,v])-u\bullet(D+\xi)(\xi(v))+v\bullet(u\bullet b_0)-\la v\bullet\xi(u),
\]
and 
\[D^*\circ\xi^*\circ \xi+\xi^*\circ(D+\xi)\circ\xi-\xi^*\circ\Rr_{b_0}+\Rr_{\xi^*(b_0)}-\Rr_{\xi^*(b_0)}^*-\la\xi^*\circ\xi=0.\]

In the same way, the seventh relation in the system \eqref{produit-semi} is equivalent to
\[\xi([D(u),v])=[u,v]\bullet b_0+(D+\xi)(u\bullet\xi(v))-u\bullet\xi(D(v))-v\bullet(D+\xi)(\xi(u))+\la\xi([u,v]),
\]
and 
\[D^*\circ\xi^*\circ \xi+\xi^*\circ(D+\xi)\circ\xi-\xi^*\circ\Rr_{b_0}+\Rr_{\xi^*(b_0)}-\Rr_{\xi^*(b_0)}^*-\la\xi^*\circ\xi=0.\]

Moreover, the eighth relation in the system \eqref{produit-semi} is equivalent to
\[D^2(u)\bullet v=u\bullet(D+\xi)^2(v)-(D+\xi)(D(u)\bullet v)-(D+\xi)(u\bullet (D+\xi)(v)),
\]
and 
\[(D+\xi)^2\circ\xi-\xi\circ D^2+ (D+\xi)\circ\Rr_{b_0}-\Rr_{b_0}\circ D+\la (D+\xi)\circ\xi-\la\xi\circ D=0.\]

The ninth relation in the system \eqref{produit-semi} is equivalent to
\[D^2(u)\bullet v=(D+\xi)^2(u\bullet v)-(D+\xi)(u\bullet (D+\xi)(v))-D(u)\bullet (D+\xi)(v),
\]
and 
\[(D+\xi)\circ\xi\circ D-\xi\circ D^2+(D+\xi)\circ\Rr_{b_0}+\Rr_{(D+\xi)(b_0)}-\la(D+\xi)\circ \xi-\la^2\xi-\la\Rr_{b_0}=0.\]

The tenth relation in the system \eqref{produit-semi} is equivalent to
\[\xi(D^2(u))=(D+\xi)(u\bullet b_0)-(D+\xi)(\xi(D(u))+u\bullet(D+\xi)(b_0)+\la(D+\xi)(\xi(u))-\la u\bullet b_0,
\]
and 
\[\xi^*\circ(D+\xi)(b_0)+D^*\circ \xi^*(b_0)-2\la \xi^*(b_0)-\Rr^*_{b_0}(b_0)=0.\]

The eleventh relation in the system \eqref{produit-semi} is equivalent to
\[\xi(D^2(u))=(D+\xi)(u\bullet b_0)+D(u)\bullet b_0+(D+\xi)^2(\xi(u))+\la \xi\circ D(u))+\la(D+\xi)(\xi(u)),
\]
and 
\[\xi^*\circ(D+\xi)(b_0)+D^*\circ \xi^*(b_0)-2\la \xi^*(b_0)-\Rr^*_{b_0}(b_0)=0.\]

Finally, the twelfth and thirteenth relations in the system \eqref{produit-semi} yield
\[D(u)\bullet(D+\xi)(v)+(D+\xi)(D(u)\bullet v)+u\bullet(D+\xi)^2(v)-(D+\xi)^2(u\bullet v)=0,
\]
and 
\[(D+\xi)\circ \xi\circ D-\xi\circ(D+\xi)\circ D-\la \xi\circ D -\Rr_{b_0}\circ D+\la^2\xi+  \Rr_{b_0}    +\la \Rr_{(D+\xi)(b_0)}=0,\]
together with
\[(D+\xi)\circ \xi\circ D-(D+\xi)^2\circ\xi+\la\xi\circ D-\la\xi\Rr_{b_0}\circ D-\Rr_{(D+\xi)(b_0)}=0,
\]
and 
\[2\xi^*((D+\xi)(b_0))=\la b_0.\]

Since  $(\mathcal{V}, D+\xi, \xi, - b_0, -\al)$ is a context of almost semi-direct product of pre-Malcev algebras, it follows that 
$(\mathcal{V}, \mathcal{H} := \M \oplus \mathcal{V}^*, \widetilde{\de}, \widetilde{\xi}, \widetilde{b}_0, -\lambda)$ is also a context of almost semi-direct product  of pre-Malcev algebras if and only if the conditions \eqref{eq:claim2} are satisfied. This proves the lemma.
\end{proof}

We now define the  flat double extension of a flat pseudo-Euclidean Malcev  algebra by a one-dimensional vector space.

\sssbegin{Theorem}\label{double-ex1}
Let $(\M, \br_{\M}, \prs_\M)$ be a flat pseudo-Euclidean Malcev algebra, and let $\bullet$ be  the Levi-Civita  product associated with it. 
Let $\mathcal{V} = \mathbb{K}d$ be a one-dimensional vector space and $\mathcal{V}^* = \mathbb{K}e$ be its dual. Assume that there exist linear maps $D, \xi : \M \to \M$, an element $b_0 \in \M$, 
and a scalar $\lambda \in \K$. 
Suppose that $(D+\xi, \xi, -b_0, -\lambda)$ is admissible on $\M$ and that 
$(D, \xi, b_0, \lambda)$ satisfies the system~\eqref{eq:claim2}. Define 
$
\widetilde{\M} := \mathbb{K}d \oplus\M \oplus \mathbb{K}e,
$
equipped with the  bracket and  bilinear form $\prs$ as follows:
\begin{equation}\label{crochet}
[d,e]=\lambda e, \quad [d,u] = D(u) + \langle
b_0, u\rangle_\M e, \quad\text{ and }\quad
[u,v] = [u,v]_{\M} -\langle
(\xi-\xi^*)(u), v\rangle_\M e,
\end{equation}
for all $u,v \in \M$ and
$$
\prs|_{\M \times \M} = \prs_{\M},
\qquad \langle e,d\rangle = \langle d,e\rangle = 1,
\qquad \langle d,\M\rangle = \langle e,\M\rangle = 0.
$$
Then, $(\widetilde{\M}, \br, \prs)$ is a flat pseudo-Euclidean Malcev algebra.

Furthermore,  the Levi-Civita  product $\star$  associated with $(\widetilde{\M}, \br, \prs)$ ,  is given by
\begin{equation}\label{Produit1-13}
\begin{array}{lll} \displaystyle 
e \star u = u \star e=e\star d =0,& e \star d = \la  e,& \displaystyle  
d \star u = (D+\xi)(u) + \langle b_0, u\rangle_\M e, \\[0.3em]
\displaystyle  u \star d = \xi(u), &d \star d = - b_0-\la d, & 
u \star v = u \bullet v - \langle \xi(u), v\rangle_\M e, 
\end{array}
\end{equation}
for all $ u,v \in\M$.
\end{Theorem}
The flat  pseudo-Euclidean Malcev algebra $(\widetilde{\M}, \br, \prs)$ is called the flat double extension of $(\M, \br_{\M}, \prs_\M)$ by means of $(D, \xi, b_0, \la).$
\begin{proof}
According to \eqref{LV}, we derive  the Levi-Civita product given by \eqref{Produit1-13}. Hence
\[
(u+\al e) \star (v+\beta e) = u \bullet v - \langle \xi(u), u\rangle_\M e,
\]
for all elements $u,v \in\M$ and $\alpha, \beta \in \mathbb{K}$.
 By Lemma \ref{Lemma1}, the first condition of system~\eqref{eq:claim2}, 
implies that the above product defines on the  vector space  $\mathcal{H} =\M \oplus \mathbb{K}e$ a pre-Malcev structure, which is a central extension of $\M$ by $\mathcal{V}$ by means of $\mu$, where 
$\mu(u,v) = - \langle \xi(u), u\rangle_\M e
$, for all $u,v\in\M$.

Now,  since $(D, \xi, b_0, \la)$ satisfies the system \eqref{eq:claim2}, it follows from Lemma \ref{Lemma2}, that 
$
(V, \mathcal{H} :=\M \oplus V^*, \widetilde{\de}, \widetilde{\xi}, \widetilde{b_0}, -\la)
$
is a context of an almost semi-direct product of pre-Malcev algebras. 
Hence, $(\M, \star)$ is a pre-Malcev algebra, 
which implies that $(\widetilde{\M}, \br, \prs)$ is a flat pseudo-Euclidean Malcev algebra.
\end{proof}
Let $(\mathcal{M}, \br, \prs)$ be a double extension of a  pseudo-Euclidean 
abelian Lie algebra $(\overline{\mathcal{M}}, \prs_{\overline{\mathcal{M}}})$ 
by means of $(\xi, D, b_0, 0)$. 
Then, the compatibility relations in the systems~\eqref{produit-semi} 
and~\eqref{eq:claim2} reduce to the following system:
\begin{equation}\label{conflat1}
\begin{cases}
\xi^{*}\circ \xi\circ D 
= \xi^{*}\circ (D+\xi)\circ \xi 
= - D^{*}\circ \xi^{*}\circ \xi 
, \\[0.3em]
\xi\circ D^{2} 
= (D+\xi)^{2}\circ \xi 
= (D+\xi)\circ \xi\circ D, \\[0.3em]
\xi^{*}((D+\xi)(b_0)) 
= D^{*}(\xi(b_0)) 
= 0.
\end{cases}
\end{equation}

\sssbegin{Example}
    Let \((\overline{\mathcal{M}} = \{e_1, e_2, e_3, e_4\}, \prs)\) be a four-dimensional abelian Lie algebra endowed with the Lorentzian metric $\prs$ defined by  
    $
    \langle e_1, e_4 \rangle = \langle e_2, e_2 \rangle = \langle e_3, e_3 \rangle = 1.$
      Let $D$ and $\xi$ be endomorphisms of $\overline{\mathcal{M}}$ and let $b_0=0$, defined as follows:
    \[
    \xi =-D= \begin{bmatrix}
    0 & 0 & a & b \\
    0 & 0 & 0 & c \\
    0 & 0 & 0 & f \\
    0 & 0 & 0 & 0
    \end{bmatrix}, 
    \]
   where $af\neq 0$ and $a,b,c,f\in \mathbb{K}$. Therefore, through a straightforward computation, we verify that \((\xi, D, b_0, 0)\) satisfies the system \eqref{conflat1}. Consequently, \(\mathcal{M} = \mathbb{K}e \oplus \overline{\mathcal{M}} \oplus \mathbb{K}d\) is a flat  double extension of the flat Lorentzian abelian Lie algebra \((\overline{\mathcal{M}}, \prs_{\overline{\mathcal{M}}})\) by means of  \((D, -\xi, b_0, 0)\).
 The bracket on \(\mathcal{M}\) is given by:   
    \[
    [d, e_3] = -a e_1, \quad [d, e_4] = -b e_1 - c e_2 - f e_3, \quad [e_2, e_4] = c e, \quad [e_3, e_4] = (f - a) e.
    \]
    Hence, \((\mathcal{M}, \br, \prs)\) is a pseudo-Euclidean Lie algebra and is flat as a Malcev algebra. However, since $af\neq 0$, it is not flat as a Lie algebra.
Indeed, using the Levi-Civita product (see \eqref{Produit1-13}), we obtain:
\[
e_3\bullet d = ae_1,\qquad
e_4\bullet d = be_1 + ce_2 + fe_3,
\quad
e_3\bullet e_4 = -a e,\qquad
e_4\bullet e_2 = -c e,\qquad
e_4\bullet e_3 = -f e.
\]
Therefore,
\[K_0(d,e_4,e_4)
= \Ll_{[d,e_4]}(e_4)-[\Ll_d,\Ll_{e_4}](e_4)
= af e\neq 0.
\]
Thus $(\mathcal{M},\br,\prs)$ is not flat as a Lie algebra.
\end{Example}

\sssbegin{Example}
    Let \((\overline{\mathcal{M}} = \{e_1, e_2, e_3, e_4\}, \prs)\) be a four-dimensional abelian Lie algebra endowed with the Lorentzian metric $\prs$ defined by  
    $
    \langle e_1, e_2 \rangle = \langle e_3, e_3 \rangle = \langle e_4, e_4 \rangle = 1.$
      Let $D$ and $\xi$ be endomorphisms of $\overline{\mathcal{M}}$ and let $b_0=0$, where
 defined as follows:  
    \[
    \xi =-D= \begin{bmatrix}
    0 & 0 & a & b \\
    0 & 0 & 0 & 0 \\
    0 & 0 & 0 & f \\
    0 & 0 & 0 & 0
    \end{bmatrix}, 
    \]
   where $af\neq 0$ and $a,b,c,f\in \mathbb{K}$. Therfore, through a straightforward computation, we verify that \((\xi, D, b_0, 0)\) satisfies the system \eqref{conflat1}. Consequently, \(\mathcal{M} = \mathbb{K}e \oplus \overline{\mathcal{M}} \oplus \mathbb{K}d\) is a flat  double extension of the flat Lorentzian abelian Lie algebra \((\overline{\mathcal{M}}, \prs_{\overline{\mathcal{M}}})\) by means of  \((D, -\xi, b_0, 0)\).
 The bracket on \(\mathcal{M}\) is given by:   
    \[
    [d, e_3] = -a e_1, \quad [d, e_4] = -b e_1 - f e_3,\quad[e_2, e_3]= a e\quad [e_2, e_4] = b e, \quad [e_3, e_4] = f e.
    \]
    Hence, \((\mathcal{M}, \br, \prs)\) is a flat pseudo-Euclidean Malcev  algebra. However, since $af\neq 0$, it is not Lie algebra.
Indeed, 
\[[[d, e_4], e_2]+ [[ e_4, e_2], d]+[[e_2, d], e_4]=af e .\]
Thus $(\mathcal{M},\br,\prs)$ is not  a Lie algebra.
\end{Example}

\sssbegin{Proposition}  \label{reduit}
Let \((\M, \br,\prs)\) be a flat pseudo-Euclidean Lie algebra. If \( J \) is a totally isotropic two-sided ideal of dimension $1$. Then we have
\begin{enumerate}
    \item  \( J \) has a trivial product (\(J\bullet J = 0\)), \( J \bullet J^\perp = 0 \), and \( J^\perp \) is a left ideal.  
\item  \( J^\perp \) is a right ideal if and only if \( J^\perp \bullet J = 0 \).  
\item  If \( J^\perp \) is a two-sided ideal of \( \M \), then the quotient Lie algebra \( \overline{\M} = J^\perp / J \) admits a canonical flat pseudo-Euclidean Malcev algebra.  
   \end{enumerate}
\end{Proposition}

\begin{proof}
\begin{enumerate}
    \item For \( u \in J \), \( v \in J^\bot \), and \( w \in \mathcal{M} \), the fact that \( J \) is  a one-dimensional totally isotropic two-sided ideal of $(\M, \bullet)$ implies that,
   \[
   \langle u \bullet v, w \rangle = -\langle v, u \bullet w \rangle = 0.
   \]  
   Therefore \( J \bullet J^\bot = 0 \). In particular, \( J \bullet J = 0 \). Additionally the relation given by 
   \[
   \langle w \bullet v, u \rangle = -\langle v, w \bullet u \rangle = 0,
   \]  
   implies that \( \mathcal{M} \bullet J^\bot \subset J^\bot \).  

\item  For \( u \in J \), \( v \in J^\bot \), and \( w \in \mathcal{M} \), we have 
   \[
   \langle v \bullet w, u \rangle = 0 \iff \langle w, v \bullet u \rangle = 0,
   \]  
   which means that \( J^\bot \bullet \mathcal{M} \subset J^\bot \) if and only if \( J^\bot \bullet J = 0 \).  

\item  Suppose that  \( J^\bot \) is a two-sided ideal of \((\mathcal{M}, \bullet)\). In this case, the quotient vector space given by   \( \overline{\mathcal{M}} = J^\bot / J \) has the structure of a Malcev algebra defined by  
   \[
   [u + J, v + J]_{\overline{M}} = [u, v] + J = (u \bullet v - v \bullet u) + J, \quad u, v \in J^\bot.
   \]  
   Since \( \prs\) is a non-degenerate, it induces on \( J^\bot \) a bilinear form with radical \( J \). \\
    Thus \( \prs|_{J^\bot \times J^\bot} \) defines by passing to the quotient a non-degenerate 
symmetric bilinear form \( \prs_{\overline{\M}} \) on the quotient vector space \( \overline{\mathcal{M}} = J^\bot / J \). This \( \prs_{\overline{\M}} \) is also pseudo-Euclidean of the Malcev algebra \( \overline{\mathcal{M}} \), given by  
   \[
   \langle u + J, v + J \rangle_{\overline{\M}} = \langle u, v \rangle, \quad \forall u, v \in J^\bot.
   \]  

If we denote the class of \( u \in J^\bot \) modulo \( J \) by \( \overline{u} = u + J \), then the Levi-Civita product  of $(\overline{\M}, \br_{\overline{M}},  \prs_{\overline{M}})$ is given by  
   \[
   \overline{u} \bullet \overline{v} = (u + J) \bullet (v + J) = u \bullet v + J,
   \]   
Therefore,  \((\overline{\mathcal{M}},  \br_{\overline{\M}}, \prs_{\overline{\M}})\) is a flat pseudo-Euclidean Malcev algebra.  
\end{enumerate}
\end{proof}
\begin{Definition}
    The Malcev algebra $\overline{\M}$ of Proposition \ref{reduit} will be called the flat pseudo-Euclidean Malcev algebra deduced from
$J^\bot$ by means of $J$. 
\end{Definition}

We now prove the converse of Theorem \ref{double-ex1}

\ssbegin{Theorem}\label{dbex}
Let $(\M, \br, \prs)$ be a flat pseudo-Euclidean Malcev algebra.  
Assume that $J$ is a one-dimensional totally isotropic two-sided ideal of $(\M, \bullet)$ and that $J^\perp$ is also a two-sided ideal of $(\M, \bullet)$.  
Denote by $\overline{\M} = J^\perp / J$ the flat pseudo-Euclidean Malcev algebra induced from $J^\perp$ by means of $J$.  
Then $(\M, \br, \prs)$ is a flat double extension of the flat pseudo-Euclidean Malcev algebra $(\overline{\M}, \br_{\overline{\M}}, \prs_{\overline{\M}})$ by means of $(D, \xi, b_0, \lambda)$.
\end{Theorem}

\begin{proof}
Let $J = \K e$ be a one-dimensional totally isotropic two-sided ideal of $(\M, \bullet)$ such that $J^\perp$ is also a two-sided ideal of $(\M, \bullet)$.  
Since $\prs$ is non-degenerate, there exists $d \in \M \setminus \{0\}$ such that 
$
\langle e, d \rangle = \langle d, e \rangle = 1.
$

As $J \subseteq J^\perp$, there exists a subspace $\mathfrak{B}$ such that $J^\perp = J \oplus \mathfrak{B}$, with the restriction $\prs_{\mathfrak{B}} = \prs|_{\mathfrak{B}\times \mathfrak{B}}$ non-degenerate.  
We then write
$
\M = \K e \oplus \mathfrak{B} \oplus \K d.
$
Since $J^\perp = J \oplus \mathfrak{B}$ is a two-sided ideal of $(\M, \bullet)$, for any $u,v \in \mathfrak{B}$, we have
\[
u \bullet v = u \bullet_\mathfrak{B} v + \mu(u,v) e,
\]
where $\mu : \mathfrak{B} \times \mathfrak{B} \to \K$ is bilinear and $\bullet_\mathfrak{B} : \mathfrak{B} \times \mathfrak{B} \to \mathfrak{B}$ is a bilinear map.  

Since $(\M, \bullet)$ is a pre-Malcev algebra, $(\mathfrak{B}, \bullet_\mathfrak{B})$ is a pre-Malcev algebra, $\mu$ satisfies the compatibility condition \eqref{con-left}, and
\[
\langle u \bullet_\mathfrak{B} v, w \rangle_\mathfrak{B} = - \langle v, u \bullet_\mathfrak{B} w \rangle_\mathfrak{B}, \quad \forall u,v,w \in \mathfrak{B}.
\]

Let $\pi : \mathfrak{B} \to J^\perp / J = \overline{\M}$, $\pi(u) = \bar{u}$, be the canonical projection.  
It is an isomorphism of pre-Malcev algebras, so we can identify $\mathfrak{B} \cong \overline{\M}$.  
Hence, we may write
$
\M = \K e \oplus \overline{\M} \oplus \K d,
$
and  $(\overline{\M}, \br_{\overline{\M}}, \prs_{\overline{\M}})$ is a flat pseudo-Euclidean Malcev algebra.

Since $J$ and $J^\perp = \K e \oplus \overline{\M}$ are two-sided ideals of $(\M, \bullet)$, they are also ideals of $(\M, \br)$.  
Then, for all $u,v \in \overline{\M}$, the brackets on $\M$ are
\[
[d,e] = \lambda e, \quad [d,u] = D(u) + T(u)e, \quad [u,v] = [u,v]_{\overline{\M}} + (\mu(u,v)+\mu(v,u)) e,
\]
where $D \in \mathrm{End}(\overline{\M})$ and $T: \overline{\M} \to \K$ is linear.  

As $\prs_{\overline{\M}}$ is non-degenerate, there exists $b_0 \in \overline{\M}$ such that $T(u) = \langle b_0, u \rangle_{\overline{\M}}$.  
Moreover, from Proposition \ref{reduit}, we have $J\bullet J^\bot=J^\bot\bullet J=\{0\}$, then the Levi-Civita product associated with $(\M, [\, ,\,], \prs)$ is
\[
\begin{aligned}
u \bullet v &= u \bullet_{\overline{\M}} v + \mu(u,v) e, & u \bullet d &= \xi(u) + f(u) e, \\
d \bullet u &= p(u) + g(u)e, & d \bullet e &= \la e, \\
d \bullet d &= \al d + c_0 + t e, & e \bullet d &= s e, \\
e \bullet u &= u \bullet e = e \bullet e = 0,
\end{aligned}
\]
where $\xi, p \in \mathrm{End}(\overline{\M})$, $f,g\in  \overline{\M}^*$, $c_0 \in \overline{\M}$, and $\la, \alpha, t, s \in \K$.

Using Koszul’s formula \eqref{lv}, for any $u,v \in \overline{\M}$ we get
\[
\mu(u,v) = - \langle \xi(u), v \rangle_{\overline{\M}}, \quad p(u) = (\xi + D)(u), \quad g(u) = \langle b_0, u \rangle, \quad f(u) = 0,\]
\[ c_0 = -b_0, \quad \la = -\al, \quad t=s=0.
\]
Thus, the Lie bracket and associated product simplify to
\[
[d,e] = \lambda e, \quad [d,u] = D(u) + \langle b_0, u \rangle_{\overline{\M}} e, \quad [u,v] = [u,v]_{\overline{\M}} - \langle (\xi - \xi^*)(u), v \rangle_{\overline{\M}} e,
\]
and
\[
\begin{aligned}
u \bullet v &= u \bullet_{\overline{\M}} v - \langle \xi(u), v \rangle_{\overline{\M}} e, & u \bullet d &= \xi(u), \\
d \bullet u &= (D + \xi)(u) + \langle b_0, u \rangle_{\overline{\M}} e, & d \bullet e &= \lambda e, \\
d \bullet d &= -\lambda d - b_0, & e \bullet d &= 0, \\
e \bullet u &= u \bullet e = e \bullet e = 0.
\end{aligned}
\]

Finally, $(\M, \bullet)$ is a pre-Malcev algebra if and only if $(D, \xi, b_0, \lambda)$ satisfies the system \eqref{eq:claim2}.  
Hence, $(\M, [\, ,\,], \prs)$ is a flat double extension of the flat pseudo-Euclidean Malcev algebra $(\overline{\M}, [\, ,\,]_{\overline{\M}}, \prs_{\overline{\M}})$ by means of  $(D, \xi, b_0, \lambda)$.
\end{proof}
\section{Flat  Lorentzian Malcev algebras}\label{s4}
In this section, we prove that all flat Lorentzian solvable Malcev algebras with a degenerate center
can be obtained by the double extension process of a flat Euclidean Lie algebra. Additionally, we prove that all  flat Lorentzian nilpotent Malcev algebras can be obtained by the double extension process of a Euclidean abelian Lie algebra. Furthermore, we show that all flat Lorentzian
 nilpotent Malcev algebras are flat Lorentzian nilpotent Lie algebras i.e $K_0=0$. 
\ssbegin{Lemma}$($\cite{Boucetta1}$)$\label{nul}
 Let $(E, \prs)$ be a Lorentzian vector space and $A$ a skew-symmetric endomorphism of $(E, \prs)$. If $A^2=0$ then $A=0$.\end{Lemma}
\ssbegin{Lemma}\label{Leprinc}
Let $(\M, \br, \prs)$    be a flat  Lorentzian Malcev algebra such that $Z(\M)$ is degenerate. Then  for any $a\in Z(\M)\cap Z(\M)^\bot$, we have  $\Ll_a=\Rr_a=0$.
\end{Lemma}

\begin{proof}
According to Proposition \ref{nilpotent}, for any \( a \in Z(\M) \), we have \( \Ll_a^3 = 0 \). If there exists $a\in Z(\M) $ such that \( \Ll_a^2 = 0 \), then using  Lemma \ref{nul}, it follows that \( \Ll_a = 0 \) for all \( a \in Z(\M) \). 

Now, suppose $\Ll_a^2 \neq 0$ for any $a \in Z(\M)$. Define \( N = \bigcap_{b \in Z(\M)} \ker (\Ll_b^2) \). Since \( \Ll_b \) is skew-symmetric, we have \( N^\perp = \sum_{b \in Z(\M)} \mathrm{Im} (\Ll_b^2) \). Since \( \Ll_a^3 = 0 \) for any \( a \in Z(\M) \), it follows that \( N^\bot \subset N \). From Koszul's formula \eqref{lv}, we obtain that  \( a \bullet b = 0 \) for all \( a, b \in Z(\M) \). It follows that \( Z(\M) \subset N \). On the other hand, using equation \eqref{K}, we take \( v = w = a \) and \( u=z \in \M \), we obtain 
\[ \Ll_a \circ \Ll_z \circ \Ll_a = \Ll_a \circ \Ll_a \circ \Ll_z. \]
From relation \eqref{re1}, we take \( u = v = a \) and \( w = z \in \M \), we get \( \Ll_a \circ \Ll_z \circ \Ll_a = 0 \). Since \( \Ll_u \) is skew-symmetric for any \( u \in \M \), it follows that \( \Ll_a \circ \Ll_a \circ \Ll_z = \Ll_z\circ \Ll_a\circ\Ll_a=0 \). Therefore we have that \( \Rr_{a \bullet (a \bullet u)} = 0 \) for all \( u \in \M \) and hence $\Rr_z=0$, for all $z\in N^\bot$.  Moreover, since \( Z(\M) \cap Z(\M)^\perp \subset N \) and \( N^\bot  / N \) is Euclidean, we deduce that \( Z(\M) \cap Z(\M)^\perp \subset N^\perp \). Consequently, \( \Rr_a = \Ll_a = 0 \) for all \( a \in Z(\M) \cap Z(\M)^\perp \).
\end{proof}

\ssbegin{Theorem}\label{Thpr}
 Let $(\M, \br, \prs)$    be a flat  Lorentzian Malcev algebra such that $Z(\M)$ is degenerate. Then $(\M, \br, \prs)$    is a flat  double extension of a flat Euclidean Malcev algebra $(\overline{\M}, \br_{\overline{\M}}, \prs_{\overline{\M}})$ by means of $(\xi, D, b_0, 0)$.
\end{Theorem}
\begin{proof}
 Let \( (\M, \br, \prs) \) be a flat Lorentzian  Malcev algebra, and assume that \( Z(\M) \) is degenerate. In this case, we define \( J = Z(\M) \cap Z(\M)^\bot = \K e \), where \( e \) is an isotropic vector. From Lemma \ref{Leprinc}, we deduce that \( \Ll_e = \Rr_e = 0 \), implying that \( J \) is a two-sided ideal with respect to the Levi-Civita product. Furthermore, the orthogonal complement \( J^\bot \) is also a two-sided ideal. According to Theorem \ref{dbex},  \( (\M, \br, \prs) \) is a double extension of a flat Euclidean Malcev algebra \( (\overline{\M}, \br_{\overline{\M}}, \prs_{\overline{\M}}) \) by means of \( (\xi, D,  b_0, \la) \). The bracket on \( \M \) is given by:
\[
[d, e] = \la e, \quad [d, u] = D(u)+ \langle b_0, u \rangle_{\overline{\M}} e, \quad [u,v] =   [u,v]_{\overline{\M}}  -\langle(\xi - \xi^*)(u), v \rangle_{\overline{\M}} e.
\]
Since \( e \in Z(\M) \), we conclude that \( \la = 0 \). 
\end{proof}
\ssbegin{Corollary}\label{SL}
  Let $(\M, \br, \prs)$    be a flat  Lorentzian solvable Malcev algebra such that $Z(\M)$ is degenerate. Then $(\M, \br, \prs)$  is a flat double extension of a flat Euclidean Lie algebra $(\overline{\M}, \br_{\overline{\M}}, \prs_{\overline{\M}})$ by means of $(\xi, D, b_0, 0)$ where $(\overline{\M}, \br_{\overline{\M}}, \prs_{\overline{\M}})$ is flat as a Lie algebra.  
\end{Corollary}
\begin{proof}
 According to Theorem \ref{Thpr},   $(\M, \br, \prs)$    is a flat double extension of a flat Euclidean Malcev algebra $(\overline{\M}, \br_{\overline{\M}}, \prs_{\overline{\M}})$ by means of $(\xi, D, b_0, 0)$. Since $(\M, \br)$ is a solvable Malcev algebra, it follows from the bracket relation \eqref{crochet} that the algebra $(\overline{\M}, \br_{\overline{\M}})$ is also a solvable Malcev algebra. According to Corollary \ref{co1}, it follows that, 
  $(\overline{\M}, \br_{\overline{\M}}, \prs_{\overline{\M}})$ is a   Euclidean Lie algebra and its flat as Lie algebra. 
\end{proof}

 \ssbegin{Theorem}\label{Thnilpotent}
  Let $(\M, \br, \prs)$ be a flat  Lorentzian nilpotent Malcev algebra. Then $(\M, \br, \prs)$    is a flat double extension of a flat Euclidean abelian Lie algebra $(\overline{\M}, \br_{\overline{\M}}, \prs_{\overline{\M}})$ by means of $\xi=-D$, $\xi^2=0$, $b_0\in \overline{\M}$.  Furthermore, $(\M,\br)$ is at most $3$-step nilpotent.
\end{Theorem}  

\begin{proof}
Since $(\M,\br)$ is  nilpotent, we have \( Z(\M) \cap [\M, \M] \neq \{0\} \). According to Proposition \ref{nilpotent}, for any \( a \in Z(\M) \), we have \( \Ll_a^3 = 0 \). 
If \( \Ll_a^2 = 0 \), then by Lemma \ref{nul}, it follows that \( \Ll_a =\Rr_a= 0 \) for all \( a \in Z(\M) \). From the relation \eqref{Ortho} we obtain that \( Z(\M) \subset [\M, \M]^\bot \), which implies that $$Z(\M) \cap [\M, \M] \subseteq Z(\M) \cap Z(\M)^\bot. $$ We put \( J := Z(\M) \cap Z(\M)^\bot = \K e \), we deduce that \( J \) is a totally isotropic subspace, and that we have  \( \Ll_e = \Rr_e = 0 \).
Now, we suppose that \( \Ll_a^2 \neq 0 \). Define 
\(
N = \bigcap_{b \in Z(\M)} \ker (\Ll_b^2).
\)
Since \( \Ll_b \) is skew-symmetric, we have 
\(
N^\perp = \sum_{b \in Z(\M)} \mathrm{Im} (\Ll_b^2).
\) 
Since  \( \Ll_a^3 = 0 \) for any \( a \in Z(\M) \), it follows that \( N^\bot \subset N \).
On the other hand, using equation \eqref{K}, we take \( v = w = a \) and \( u = z \in \M \), we obtain
\[
\Ll_a \circ \Ll_z \circ \Ll_a = \Ll_a \circ \Ll_a \circ \Ll_z.
\]
From relation \eqref{re1}, setting \( u = v = a \) and \( w = z \in \M \), we get \( \Ll_a \circ \Ll_z \circ \Ll_a = 0 \). Since \( \Ll_u \) is skew-symmetric for any \( u \in \M \), it follows that \( \Ll_z \circ \Ll_a \circ \Ll_a = 0 \). Consequently, \( \Rr_{a \bullet (a \bullet u)} = 0 \) for every \( u \in \M \), which implies that \( N^\bot \subset (\M \bullet \M)^\bot\) and hence \( \M \bullet \M \subset N \).

In addition to that,  we have  \( N^\bot \subset N \),  which means that we can write  \( \M \) as in the following decomposition 
$
\M := \K e \oplus U \oplus \K f,
$
where \(N^\bot=\K e,\; N = \K e \oplus U \), and \( f \) is a totally isotropic vector satisfying \( \langle e, f \rangle = 1 \).  We deduce that 
\[
e \bullet u = g(u)e + F(u), \quad e \bullet f = \lambda e + c_0, \quad z \bullet e = 0
\]
for all \( u \in U \) and \( z \in \M \), where \( \lambda \in \K \), \( g \in U^* \), \( F \in \mathrm{End}(U) \), and \( c_0 \in U \). Since \( \Ll_z \) is skew-symmetric for any \( z \in \M \), we obtain
\begin{align*}
&\langle e \bullet f, f \rangle = \lambda = 0, \quad \langle e \bullet f, u \rangle = \langle c_0, u \rangle = -\langle f, e \bullet u \rangle = -g(u),\\
&\langle e\bullet u,v\rangle=\langle F(u),v\rangle=-\langle u, e\bullet v\rangle=-\langle u, F(v)\rangle.
\end{align*}
Thus, we conclude that we have 
\[
e \bullet u = -\langle c_0, u \rangle e + F(u), \quad e \bullet f = c_0, \quad z \bullet e = 0,
\]
where \( F \) is skew-symmetric with respect to \( \prs \). Furthermore, for any \( u \in U \)  we have
\[
\ad^n_e(u) = -\langle c_0, F^{n-1}(u) \rangle e + F^n(u).
\]
Since \( (\M, \br) \) is nilpotent, it follows that \( F \) is nilpotent. However \( U \) is Euclidean and \( F \) is both skew-symmetric and nilpotent, we deduce that \( F = 0 \).

Now using relation \eqref{K}, computing \( K(f, e, e) u \), for any \( u \in U \), we obtain
\[
K(f, e, e) u = \langle c_0, c_0 \rangle \langle c_0, u \rangle.
\]
Since \( (\M, \br, \prs) \) is flat and \( U \) is Euclidean, it follows that \( c_0 = 0 \). Thus we conclude that we have  \( e \bullet z = z \bullet e = 0 \), for all $z\in \M$.
In both cases, there exists an isotropic vector $e$
 with \( \Ll_e = \Rr_e = 0 \). Defining \( J := Z(\M) \cap Z(\M)^\bot = \K e \). Thus, it is clear that  \( J \) is a two-sided ideal with respect to the Levi-Civita product. Moreover, its orthogonal \( J^\bot \) is also a two-sided ideal. According to Theorem \ref{dbex}, \( \M \) admits the decomposition
$
\M = \K e \oplus \overline{\M} \oplus \K d,
$
which   \( \M \) is a flat  double extension of the flat Euclidean Malcev algebra \( (\overline{\M}, \br_{\overline{\M}}, \prs_{\overline{\M}}) \) by means of \( (\xi, D, b_0, \la) \). The Malcev bracket on \( \M \) is given by
\begin{equation}
[d, u] =   D(u)+\langle b_0, u \rangle_{\overline{\M}} e, \quad [u,v] = [u,v]_{\overline{\M}}-\langle(\xi - \xi^*)(u), v \rangle_{\overline{\M}}  e ,
\label{Malbra}\end{equation}
for all \( u,v \in \overline{\M} \). Since \( (\M, \br) \) is nilpotent, it is clear that both 
 $D$ and \( (\overline{\M}, \br_{\overline{\M}}) \) are nilpotent. It follows that \( (\overline{\M}, \br, \prs) \) is a flat Euclidean nilpotent Malcev algebra. By Corollary \ref{Nul} we deduce that \( (\overline{\M}, \br) \) is an abelian Lie algebra. Consequently, the relations in system \eqref{conflat1} reduce to the following system:
\begin{equation}
\begin{cases}
\xi^{*}\circ \xi\circ D 
= \xi^{*}\circ (D+\xi)\circ \xi 
= - D^{*}\circ \xi^{*}\circ \xi 
, \\[0.3em]
\xi\circ D^{2} 
= (D+\xi)^{2}\circ \xi 
= (D+\xi)\circ \xi\circ D, \\[0.3em]
\xi^{*}((D+\xi)(b_0)) 
= D^{*}(\xi(b_0)) 
= 0.
\label{sy1}\end{cases}
\end{equation}

Put \( A = D + \xi \). We have  
\begin{align*}
    A^3 &= (D+\xi)^2\circ(D+\xi) \\ 
        &= (D+\xi)^2D + (D+\xi)^2\circ \xi \\ 
        &= D^3 + (D+\xi) \circ \xi \circ D + \xi \circ D^2 + (D+\xi)^2\circ \xi \\ 
        &= D^3 + 3\xi \circ D^2,
\end{align*}  
where we used the relation \( (D+\xi) \circ \xi \circ D = (D+\xi)^2 \circ \xi = \xi \circ D^2 \).  
By induction, we deduce that for any \( n \in \mathbb{N}^* \), we have
\[
A^{2n+1} = D^{2n+1} + (2n+1) \xi \circ D^{2n}.
\]  

Indeed for \( n = 1 \), the formula holds as shown above. Now assuming that it holds for some \( n \), we prove it for \( n+1 \)  
\begin{align*}
    A^{2n+3} &= (D+\xi)^2 \circ A^{2n+1} \\  
    &= (D+\xi)^2 \circ \big( D^{2n+1} + (2n+1)\xi \circ D^{2n} \big) \\  
    &= (D+\xi)^2 \circ D^{2n+1} + (2n+1)(D+\xi)^2 \circ \xi \circ D^{2n} \\  
    &= D^{2n+3} + (D+\xi) \circ \xi \circ D \circ D^{2n} + \xi \circ D^{2n+2} + (2n+1)(D+\xi)^2 \circ \xi \circ D^{2n} \\  
    &= D^{2n+3} + (2n+3) \xi \circ D^{2n+2}.
\end{align*}  

because \( (D+\xi) \circ \xi \circ D = (D+\xi)^2 \circ \xi = \xi \circ D^2 \). Since \( D \) is nilpotent, it follows that \( A \) is also nilpotent. Moreover, since \( A \) is skew-symmetric with respect to \( \prs_{\overline{\M}} \), we conclude that \( A = 0 \), which implies \( D = -\xi \).  
From  \eqref{sy1}, we have \( (\xi^2)^* \xi = 0 \), which leads to \( \xi^2 = D^2 = 0 \). Consequently, using  the Malcev bracket \eqref{Malbra}, we deduce that \( (\M, \br) \) is at most nilpotent of order 3.  
This completes the proof.  
\end{proof}

\ssbegin{Proposition}\label{PLie}
 Let $(\M, \br, \prs)$ be a flat Lorentzian  nilpotent Malcev algebra, then $(\M, \br, \prs)$ is a flat Lorentzian nilpotent Lie algebra.  
\end{Proposition}

\begin{proof}  
According to Theorem \ref{Thnilpotent}, $(\M, \br, \prs)$ is a double extension of the Euclidean abelian Lie algebra $(\overline{\M}, \br_{\overline{\M}}, \prs_{\overline{\M}})$ by means of $\xi=-D$, $\xi^2=0$, $b_0\in \overline{\M}$. By Koszul’s formula \eqref{lv}, the Levi-Civita product is given by:  
\begin{equation}
\begin{cases} 
    e \bullet u = u \bullet e = e \bullet e = e \bullet d = d \bullet e = 0, \\
    u \bullet v = -\langle \xi(u), v \rangle_{\overline{\M}} e, \\
    d \bullet d =- b_0, \\
    d \bullet u = \langle b_0, u \rangle_{\overline{\M}} e, \\
    u \bullet d =  \xi(u),
\end{cases} 
\end{equation}
for all \( u,v \in \overline{\M} \).  
We now prove that \( (\M, \br, \prs) \) is a flat Lorentzian Lie algebra, i.e., its curvature tensor vanishes identically,  
\[
K_0(x,y)z = \Ll_{[x,y]}(z) - \Ll_x \circ\Ll_y (z) + \Ll_y\circ \Ll_x (z), \quad \forall x,y,z \in \M.
\]  
We verify \( K(x,y)z = 0 \) for all cases:\\  
1. Case \( K(d, u, d) = 0 \) for any \( u \in \overline{\M} \): 
\begin{align*}
    K(d, u, d) &= \Ll_{[d, u]}(d) - \Ll_d \circ\Ll_u (d)+\Ll_u\circ \Ll_d (d) \\  
    &= [d, u] \bullet d - d \bullet (u \bullet d) + u \bullet (d \bullet d) \\  
    &= -\xi(u) \bullet d - d \bullet \xi(u) - u \bullet b_0 \\  
    &= -\xi^2(u) -\langle b_0, \xi(u) \rangle_{\overline{\M}} e + \langle \xi(u), b_0 \rangle_{\overline{\M}} e \\  
    &= 0.  
\end{align*}  

2. Case \( K(d, u, v) = 0 \) for any \( u, v \in \overline{\M} \):  
\begin{align*}
    K(d, u, v) &= \Ll_{[d, u]}(v) - \Ll_d \circ\Ll_u (v) +\Ll_u \circ\Ll_d (v) \\  
    &= [d, u] \bullet v - d \bullet (u \bullet v) + u \bullet (d \bullet v) \\  
    &= -\xi(u) \bullet v \\  
    &= \langle \xi^2 (u), v \rangle_{\overline{\M}} e \\  
    &= 0.  
\end{align*}  

3. Case \( K(u, v, d) = 0 \) for any \( u, v \in \overline{\M} \):  
\begin{align*}
    K(u, v, d) &= \Ll_{[u, v]}(d) - \Ll_u \circ\Ll_v (d) + \Ll_v \circ\Ll_u (d) \\  
    &= [u, v] \bullet d - u \bullet (v \bullet d) + v \bullet (u \bullet d) \\  
    &=- u \bullet \xi(v) + v \bullet \xi(u) \\  
    &= \langle \xi(u), \xi(v) \rangle_{\overline{\M}} e - \langle \xi(v), \xi(u) \rangle_{\overline{\M}} e \\  
    &= 0.  
\end{align*}  

4. Case \( K(u, v, w) = 0 \) for any \( u, v, w \in \overline{\M} \):  
\begin{align*}
    K(u, v, w) &= \Ll_{[u, v]}(w) - \Ll_u \circ \Ll_v (w) + \Ll_v \circ \Ll_u (w) \\   
    &= [u, v] \bullet w - u \bullet (v \bullet w) + v \bullet (u \bullet w) \\  
    &= 0.  
\end{align*}  
Since the curvature tensor vanishes in all cases, we conclude that  $(\M,\br,\prs)$ is a flat Lorentzian Lie algebra.
\end{proof}
\ssbegin{Remark}
    Note that  every flat  Lorentzian nilpotent Malcev algebra \((\M, \br, \prs)\) is in fact a flat Lorentzian nilpotent Lie algebra.  In \cite{Bajo}, the authors carried out a classification of all flat  Lorentzian nilpotent Lie algebras.
\end{Remark}

\bibliographystyle{elsarticle-num}

\end{document}